\newtheorem{theorem}{Theorem}[section]
\newtheorem{lemma}[theorem]{Lemma}
\newtheorem{corollary}[theorem]{Corollary}
\newtheorem{prop}[theorem]{Proposition}
\newtheorem{conjecture}[theorem]{Conjecture}
\theoremstyle{definition}
\newtheorem{defn}[theorem]{{Definition}}
\newtheorem{example}[theorem]{Example}
\newtheorem{chunk}[theorem]{\hspace*{-1.065ex}\bf}
\newtheorem*{chunk*}{}
\newtheorem*{ack}{Acknowledgement}
\numberwithin{equation}{section}
\theoremstyle{remark}
\newtheorem{remark}[theorem]{Remark}
\newcommand{\xra}[1]{\xrightarrow{#1}}
\newcommand{\ch}{ch}
\newcommand{\del}{\partial}
\newcommand{\et}{{\operatorname{\acute{e}t}}}
\newcommand{\ev}{\operatorname{ev}}
\newcommand{\fm}{\mathfrak{m}}
\newcommand{\fp}{\mathfrak{p}}
\renewcommand{\H}{{\operatorname{H}}}
\newcommand{\iso}{\cong}
\newcommand{\K}[1]{K(#1)}
\newcommand{\G}[1]{G(#1)}
\newcommand{\sk}{\mathsf{k}}
\newcommand{\length}{\operatorname{length}}
\newcommand{\cO}{\mathcal{O}}
\newcommand{\odd}{\operatorname{odd}}
\newcommand{\bP}{\mathbb{P}}
\newcommand{\Proj}[1]{\operatorname{Proj}(#1)}
\newcommand{\myProj}{\operatorname{Proj}}
\newcommand{\bQ}{\mathbb{Q}}
\newcommand{\bC}{\mathbb{C}}
\newcommand{\Tor}{\operatorname{Tor}}
\newcommand{\bZ}{\mathbb{Z}}
\newcommand{\Spec}{\operatorname{Spec}}
\newcommand{\Hs}[2][]{\operatorname{H}_{#1}(#2)}
\def\Het{\operatorname{H}_{\text{\'et}}}
\newcommand{\ds}{\displaystyle}
\begin{document}

\subjclass[2000]{13D02, 14C35, 19L10}

\title[An invariant for complete intersections]
{The vanishing of a higher codimension analogue of Hochster's theta invariant}

\author{W. Frank Moore} \address{Department of Mathematics, Wake Forest
  University, Winston-Salem, NC 27109} \email{moorewf@wfu.edu}

\author{Greg Piepmeyer} \address{University
  of Missouri, Columbia, MO 65211} \email{ggpz95@mail.missouri.edu}

\author{Sandra Spiroff} \address{Department of Mathematics, University
  of Mississippi, University, MS 38677} \email{spiroff@olemiss.edu}
 % \thanks{}

\author{Mark E. Walker} \address{Department of Mathematics, University
  of Nebraska, Lincoln, NE 68588} \email{mwalker5@math.unl.edu}
\thanks{The fourth author was supported in part by NSF grant
  DMS-0601666}

\begin{abstract}
  We study H.~Dao's invariant $\eta_c^R$ of pairs of modules defined
  over a complete intersection ring $R$ of codimension $c$ having an
  isolated singularity.  Our main result is that $\eta_c^R$ vanishes
  for all pairs of modules when $R$ is a {\em graded} complete
  intersection ring of codimension $c > 1$ having an isolated
  singularity. A consequence of this result is that all pairs of
  modules over such a ring are $c$-$\Tor$-rigid.
\end{abstract}

\date \today
\maketitle

%%%%%%%%%%%%%%%%%INTRODUCTION%%%%%%%%%%%%%%%%

\section{Introduction}

%%%%%%%%%%%%%%%%%%%%%%%%%%%%%%%%%%%%%%%%%%

Let $R$ be an isolated complete intersection singularity, i.e., $R$
is the quotient of a regular local ring $(Q, \fm)$ by a regular
sequence $f_1, \dots, f_c \in Q$, and $R_\fp$ is regular for all $\fp
\ne \fm$.  For any pair $(M,N)$ of finitely generated $R$-modules, the
$\Tor$ modules $\Tor^R_j(M,N)$ have finite length when $j \gg
0$. Moreover, the lengths of the odd and even indexed $\Tor$ modules
in high degree follow predictable patterns.  There are polynomials
$$
P_{\ev}(j) = a_{c-1}j^{c-1} + \cdots +a_1j + a_0 \, \text{ and } \, 
P_{\odd}(j) = b_{c-1}j^{c-1} +
\cdots + b_1j + b_0
$$ of degree at most $c-1$ such that
$$
\length \Tor^R_{2j}(M,N) = P_{\ev}(j)
\, \text{ and } \,
\length \Tor^R_{2j+1}(M,N) = P_{\odd}(j) 
$$
for all $j \gg 0$ (see, for example,  \cite[Theorem 4.1]{Dao3}).

The polynomials $P_{\ev}$ and $P_{\odd}$ need not be the same, nor is
it necessary that the coefficients of $j^{c-1}$ coincide.  A natural
invariant of the pair $(M,N)$ is thus the difference, $a_{c-1} -
b_{c-1}$, of these coefficients. Up to a constant factor, this
difference is {\em Dao's $\eta_c^R$-invariant}.

For example, if $c=1$, then $P_{\ev} = a_0$ and $P_{\odd} = b_0$ are
both constants.  This reflects the fact that, in our present context,
the Eisenbud operator
$$
\chi:~\Tor^R_j(M,N) \xrightarrow{\cong} \Tor^R_{j-2}(M,N)
$$
is an isomorphism for $j \gg 0$.  In this case, the invariant
$\eta_1^R(M,N)$ is (one half of) the difference $a_0 - b_0$ of these
$\Tor$-lengths. This difference is {\em Hochster's
  $\theta$-invariant}.

In our previous paper \cite{MPSW} we studied Hochster's
$\theta$-invariant in the special case where $R$ is a graded, isolated
hypersurface singularity. Now we employ similar techniques to study
the invariant $\eta_c^R$ for graded, isolated complete intersection
singularities of codimension $c > 1$. That is, we assume
\begin{equation} \label{E29}
R = \sk[x_0, \dots, x_{n+c-1}]/(f_1, \dots, f_c)
\end{equation}
where $\sk$ is a field, the $f_l$'s are homogeneous polynomials, and
$\myProj(R)$ is a smooth $\sk$-variety.  With these assumptions, the
irrelevant maximal ideal $\fm = (x_0, \dots, x_{n+c-1})$ of $R$ is the
only non-regular prime in $\Spec R$.  Our main result is that, for
such a ring $R$, the invariant $\eta_c^R(M,N)$ vanishes for all
$R$-modules $M$ and $N$, provided $c > 1$.  See Theorem
\ref{etacvanish}.

H.~Dao \cite[Theorem 6.3]{Dao3} has proven that the vanishing of
$\eta^R_c(M,N)$ implies that the pair $(M,N)$ is {\em
  $c$-$\Tor$-rigid}, meaning that if $c$ consecutive $\Tor$ modules
vanish, then all subsequent $\Tor$ modules vanish too.  We therefore
conclude that all pairs of modules over rings of the form \eqref{E29}
having an isolated singularity are $c$-$\Tor$-rigid, provided $c > 1$;
our previous result \cite[Remark 3.16]{MPSW} shows $c$-$\Tor$-rigidity
for $c = 1$ when the dimension of $R$ is even.  In general, one only
has $(c+1)$-$\Tor$-rigidity for pairs of modules over a codimension
$c$ complete intersection \cite{Mur}.

We conjecture that $\eta^R_c(M,N) = 0$ for all pairs of modules
$(M,N)$ over an isolated complete intersection singularity of
codimension $c > 1$, and hence that all pairs of modules over such a
ring are $c$-$\Tor$-rigid.  There are many well-known examples (of
isolated hypersurface singularities) where $(M,N)$ is not
$1$-$\Tor$-rigid, and hence $\theta^R(M,N) \ne 0$.

The invariant $\eta^R_c(M,N)$ is also defined for complete
intersection rings that are not isolated singularities, provided the
pair $(M,N)$ has the property that $\Tor^R_j(M,N)$ has finite length
for all $j \gg 0$.  We include an example due to D.~Jorgensen and
O.~Celikbas that shows that $\eta^R_2$ need not vanish for a complete
intersection of codimension $2$ if the dimension of the singular locus
is positive.

%%%%%%%%%%GENERALIZED THETA FUNCTION%%%%%%%%%%%%%%%%%%%%

\section{Dao's $\eta_c^R$-Invariant}

%%%%%%%%%%%%%%%%%%%%%%%%%%%%%%%%%%%%%%%

In this section we recall the definition of Dao's $\eta_c^R$-invariant
for complete intersections.

\begin{prop} \label{P's} Let $R$ be the quotient of a noetherian ring
  $Q$ by a regular sequence $f_1, \dots, f_c$.  For a pair of finitely
  generated $R$-modules $M$ and $N$, suppose the $Q$-module
  $\Tor^Q_j(M,N)$ vanishes for all $j \gg 0$ and that the $R$-modules
  $\Tor^R_j(M,N)$ are supported on a finite set of maximal ideals
  $\{\fm_1, \dots, \fm_s\}$ of $R$ for all $j \gg 0$.  Then there are
  polynomials $P_{\ev}= P_{\ev}^R(M,N)$ and $P_{\odd} =
  P_{\odd}^R(M,N)$ of degree at most $c-1$ so that
\begin{equation*}
\length \Tor_{2j}^R(M,N) = P_{\ev}(j)  
\, \text{ and } \,
\length \Tor_{2j+1}^R(M,N) = P_{\odd}(j)  
\end{equation*} 
for all $j \gg 0$. 
\end{prop}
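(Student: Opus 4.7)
The plan is to reduce the statement to a standard Hilbert--Serre argument for a graded module over a polynomial ring in $c$ variables over an Artinian base. The two key inputs are the Eisenbud cohomology operators associated to the regular sequence $f_1,\dots,f_c$ and the finite-support hypothesis on the higher $\Tor^R$ modules.

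First I would invoke the theorem of Gulliksen (and refinements by Eisenbud and Avramov--Buchweitz): because $f_1,\dots,f_c$ is a $Q$-regular sequence and $\Tor^Q_j(M,N)=0$ for $j\gg 0$, the total module $T:=\bigoplus_{j\ge 0}\Tor^R_j(M,N)$ carries a canonical structure of a finitely generated graded module over the polynomial ring $\mathcal{S}:=R[\chi_1,\dots,\chi_c]$, where the cohomology operators $\chi_1,\dots,\chi_c$ shift homological index by $2$. Splitting $T$ by parity produces
\[
T_{\ev}:=\bigoplus_{j\ge 0}\Tor^R_{2j}(M,N),\qquad T_{\odd}:=\bigoplus_{j\ge 0}\Tor^R_{2j+1}(M,N),
\]
each a finitely generated graded $\mathcal{S}$-module whose $\chi_i$'s have degree $1$ after reindexing.

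Next I would use the finite-support hypothesis to pass to an Artinian coefficient ring. Pick $j_0$ so that $\Tor^R_j(M,N)$ is supported in $\{\fm_1,\dots,\fm_s\}$ for all $j\ge j_0$; such a finitely generated $R$-module, being supported only at finitely many maximal ideals, has finite length. Let $I:=\fm_1\cap\cdots\cap\fm_s$ and set $T'_{\ev}:=\bigoplus_{2j\ge j_0}\Tor^R_{2j}(M,N)$; this is a finitely generated graded $\mathcal{S}$-submodule of $T_{\ev}$. Each of its finitely many homogeneous generators sits inside a finite-length module supported in $V(I)$, and so is annihilated by some power of $I$; taking a common power $N$ large enough, and using $R$-linearity of the $\chi_i$'s, we get $I^N\cdot T'_{\ev}=0$. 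Hence $T'_{\ev}$ (and likewise $T'_{\odd}$) is a finitely generated graded module over $A[\chi_1,\dots,\chi_c]$, where $A:=R/I^N$ is an Artinian ring.

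The final step is the classical Hilbert--Serre argument: by induction on $c$, any finitely generated graded module $U=\bigoplus U_j$ over the polynomial ring $A[\chi_1,\dots,\chi_c]$ in $c$ variables of degree $1$ over an Artinian ring $A$ satisfies $\length U_j=P(j)$ for $j\gg 0$, where $P$ has degree at most $c-1$. The inductive step exploits the exact sequence
\[
0\to\ker(\chi_c)\to U(-1)\xrightarrow{\chi_c}U\to U/\chi_c U\to 0,
\]
in which $\ker(\chi_c)$ and $U/\chi_c U$ are finitely generated graded $A[\chi_1,\dots,\chi_{c-1}]$-modules, so by induction their length functions are eventually polynomials of degree $\le c-2$; summing these first differences then yields the degree bound for $U$. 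Applied to $T'_{\ev}$ and $T'_{\odd}$, this produces the desired $P_{\ev}$ and $P_{\odd}$. The main obstacle is the first step: citing or re-deriving Gulliksen's theorem in sufficient generality, since everything downstream is a routine Hilbert--Serre reduction.
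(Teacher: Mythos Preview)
Your overall strategy — pass to the ring of Eisenbud/Gulliksen operators and then run a Hilbert--Serre argument over an Artinian base — is exactly the shape of the paper's alternative proof in the appendix, but the first step contains a genuine gap. The Eisenbud operators on $\Tor$ act as $\chi_l\colon \Tor^R_j(M,N)\to\Tor^R_{j-2}(M,N)$: they \emph{lower} homological degree by~$2$. Consequently, after your parity reindexing the $\chi_l$ have degree $-1$, not $+1$, and $\bigoplus_j\Tor^R_j(M,N)$ is essentially never finitely generated over $R[\chi_1,\dots,\chi_c]$ with this grading: a finite set of generators would sit in bounded degrees and the $\chi_l$ could only reach \emph{lower} degrees, whereas $\Tor^R_j(M,N)$ is nonzero for all large~$j$ over a non-regular complete intersection. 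You are presumably thinking of the $\Ext$ situation, where the operators do raise degree and Gulliksen's theorem does yield noetherianity; that is the setting in which your induction with the short exact sequence $0\to\ker(\chi_c)\to U(-1)\to U\to U/\chi_cU\to0$ is the standard one.

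What Gulliksen's theorem (and Dao's Lemma~3.2, reproved in the paper's appendix as Proposition~\ref{AppProp}) actually gives for $\Tor$ is that a high truncation $\bigoplus_{j\ge J}\Tor^R_j(M,N)$ is \emph{artinian} over $R[\chi_1,\dots,\chi_c]$; the paper is careful about this point, noting that its Lemma~\ref{G(1.2)} ``sidesteps the issue raised in \cite[Example~2.9]{Dao3}.'' From artinianity one still gets polynomial growth, but not via your noetherian Hilbert--Serre step as written. The paper's short proof simply localizes at each $\fm_i$ and cites Dao; its appendix route is to deduce from artinianity that the Koszul complex on $\chi_1,\dots,\chi_c$ acting on $\Tor$ is exact in high degrees, which yields a linear recursion of order~$c$ (in steps of two) among the lengths and hence eventual polynomials of degree $\le c-1$ in each parity. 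Your argument can be repaired either this way, or by first passing to your Artinian base $A=R/I^N$ and then applying Matlis duality over each $A_{\fm_i}$ to convert the artinian $A[\chi_1,\dots,\chi_c]$-module into a noetherian one with variables of positive degree — after which your Hilbert--Serre induction goes through verbatim.
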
 

\begin{proof}
  Apply \cite[Theorem 4.1(2)]{Dao3} to each $R_{\fm_i}$ and add the resulting
  polynomials to obtain the polynomials here. See Appendix
  \ref{App} for an alternative proof of this result. \end{proof}

The difference of the coefficients of $j^{c-1}$ in
$P_{\ev}=P_{\ev}^R(M,N)$ and in $P_{\odd}$ is the basis for an
invariant of $(M, N)$.  We can obtain these coefficients through the
$(c-1)$-st iterated first difference: the first difference
of a polynomial $q(j)$ is the polynomial $q^{(1)}(j) = q(j) - q(j-1)$,
and recursively one defines $ q^{(i)} = (q^{(i-1)})^{(1)}$.

\begin{defn}
In the set up of Proposition \ref{P's}, define
$$\eta_c^R(M,N) = \frac{(P_{\ev} - P_{\odd})^{(c-1)}}{2^c \cdot c!}.$$
\end{defn}

This invariant of the pair $(M,N)$ is Dao's $\eta_c^R$-invariant
\cite[4.2]{Dao3}.
\begin{remark} \label{eta&theta} %
  For a pair of $R$-modules, Dao sets 
\begin{equation*}
\begin{aligned}
 \beta_j(M,N) & = \begin{cases} \length{\Tor^R_j(M, N)} & \text{if
     $\length{\Tor^R_j(M, N)} < \infty$ and} \\
                                      0                       &
                                      \text{otherwise.} \end{cases} \\
\end{aligned}
\end{equation*}
It can be shown, as an easy application of Proposition \ref{P's}, that
under the assumptions in that result, if $c > 0$, then
\begin{equation*}
  \eta_c^R(M,N) = \lim_{n \to \infty} \frac{\sum_{j = 0}^n (-1)^j
    \beta_j(M,N)}{n^c}. 
\end{equation*}
This limit is the original definition of $\eta_c^R(M,N)$ due to Dao.
\end{remark}

Our main result, Theorem \ref{etacvanish}, suggests
the following conjecture:

\begin{conjecture} \label{conj} %
  Suppose $R = Q/(f_1, \dots, f_c)$ with $Q$ a regular noetherian ring
  and $f_1, \dots, f_c$ a regular sequence, with $c > 1$. If the
  singular locus of $R$ consists of a finite number of maximal ideals,
  then $\eta_c^R(M,N) = 0$ for all finitely generated $R$-modules $M$
  and $N$.
\end{conjecture}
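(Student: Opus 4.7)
The plan is to reduce to the graded case established by Theorem \ref{etacvanish}. Since $\Tor^R_j(M,N)$ is supported on the finitely many singular maximal ideals $\fm_1,\dots,\fm_s$ for $j \gg 0$, one obtains a decomposition $\eta_c^R(M,N) = \sum_i \eta_c^{R_{\fm_i}}(M_{\fm_i}, N_{\fm_i})$. Combining this with faithful flatness of completion and the fact that completion preserves lengths of finite length modules, the problem reduces to showing $\eta_c^{\widehat R}(\widehat M,\widehat N) = 0$ for every complete local complete intersection $(\widehat R,\fm)$ with isolated singularity.

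One natural attempt is to algebraize $\widehat R$ by a graded complete intersection via Artin--Elkik approximation and then invoke the main theorem. This runs into a serious difficulty: the local rings appearing in Theorem \ref{etacvanish} are cones over smooth projective complete intersections, and not every complete local CI with isolated singularity is of this restricted form. To push through this route one would need either a deformation argument showing that every such singularity deforms through isolated singularities to a cone while preserving $\eta_c$, or an ancillary construction that realizes the relevant $\Tor$ lengths on an auxiliary graded ring built from $\widehat R$.

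The more promising route is to bypass algebraization entirely by reinterpreting $\eta_c^R(M,N)$ as a pairing attached to the punctured spectrum $U = \Spec R \setminus \{\fm_1,\dots,\fm_s\}$, which is smooth by the isolated singularity hypothesis. In the graded case the proof presumably factors $\eta_c$ through intersection theory on $\myProj(R)$, and the iterated first difference appearing in its definition suggests that $\eta_c$ extracts a codimension-$c$ characteristic class; for $c > 1$ one would hope to show such a class vanishes using the smoothness of $U$ together with the Koszul structure of the defining regular sequence $f_1,\dots,f_c$. Making this rigorous --- defining a K-theoretic or Chow-theoretic receptacle for $\eta_c$ over a non-projective base and verifying that it captures the asymptotic $\Tor$ behavior faithfully --- is the principal obstacle, and is where I would expect to spend most of the effort.
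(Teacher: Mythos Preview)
The statement you are trying to prove is labeled \emph{Conjecture} in the paper, and for good reason: the paper does not prove it. The authors establish only the graded case (Theorem~\ref{etacvanish} and Corollary~\ref{nograding}), where $R$ is a standard-graded complete intersection over a field with smooth $\myProj(R)$, and they explicitly present the general local statement as an open problem motivated by that result. There is therefore no ``paper's own proof'' to compare your proposal against.

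Your proposal is not a proof either, and you seem aware of this. The reduction to the complete local case is fine and standard. But both continuation strategies you sketch have genuine, unresolved gaps that you yourself flag. For the algebraization route: an arbitrary complete local isolated complete intersection singularity is simply not a localization of a graded ring of the form \eqref{E29}, and no known deformation or approximation argument bridges this. Artin approximation gives algebraic (hence excellent, essentially of finite type) models, but not graded ones, and the paper's method leans hard on the projective geometry of $X=\myProj(R)$, in particular on the Lefschetz-type input for \'etale cohomology of a smooth projective complete intersection. For the punctured-spectrum route: you would need a cycle-theoretic or $K$-theoretic interpretation of $\eta_c$ over a non-projective smooth scheme, together with a vanishing mechanism replacing the Lefschetz argument; no such interpretation is supplied, and this is precisely the content of the conjecture.

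In short, your write-up is a reasonable outline of why the conjecture is plausible and where the difficulties lie, but it does not close the gap between Theorem~\ref{etacvanish} and Conjecture~\ref{conj}. That gap remains open in the paper as well.
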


\begin{remark}
  The case $N=\sk$ of Conjecture \ref{conj} follows from a result due
  to L.~Avramov, V.~Gasharov, and I.~Peeva \cite[8.1]{AGP}.  In this
  case, the length of the $\Tor_j$ record the Betti numbers of $M$
  over $R$, and part of their result states that both the even and odd
  Betti numbers grow at the same polynomial rate and have the same
  leading coefficient.
\end{remark}

\begin{example} \label{CJex} %
  The following example is due to D.~Jorgensen \cite[Example 4.1]{Jor}
  and O.~Celikbas \cite[Example 3.11]{Cel}.  Let $\sk$ be a field, and
  let $R = \sk[\![x,y,z,u]\!]/(xy,zu)$.  Then $R$ is a local complete
  intersection of codimension two with positive dimensional singular
  locus.  Let $M = R/(y,u)$, and let $N$ be the cokernel of the map
$$
R^2 \xrightarrow{\left(\begin{matrix}0 & u \\ -z & x \\ y &
      0\end{matrix}\right)} R^3.
$$
Then the pair $(M,N)$ is not $2$-$\Tor$-rigid, and hence, by
\cite[6.3]{Dao3}, $\eta_2^R(M,N) \neq 0$.
\end{example}

%%%%%%%%%%Graded Case%%%%%%%%%%%%%%%%%%%

\section{The Graded Case}

%%%%%%%%%%%%%%%%%%%%%%%%%%%%%%%%%%%%%%%

\begin{chunk} \label{opening assumptions}
Let $Q$ be a graded noetherian ring, $f_1,\dots,f_c$ be a $Q$-regular
sequence of homogeneous elements, and $R = Q/(f_1,\dots,f_c)$. Then
for each pair of finitely generated graded $R$-modules $M$ and $N$,
$\Tor_j^R(M,N)$ is a graded $R$-module for all $j$.  Moreover, with
the notation $d_l = \deg f_l$, the Eisenbud operators \cite{Ei} $\chi_1,
\dots, \chi_c$ determine maps of graded $R$-modules
\begin{equation*}
  \chi_l \colon \Tor_j^R(M,N) \to \Tor_{j-2}^R(M,N) (-d_l)  
\end{equation*} 
for all $j$, where for a graded $R$-module $T$, we define $T(m)$ to be
the graded $R$-module satisfying $T(m)_k = T_{k + m}$.  Since the
actions of the $\chi_l$ commute, we may view $\bigoplus_{j,i}
\Tor_j^R(M,N)_i^{}$ as a bigraded module over the bigraded ring $S =
R[\chi_1,\dots,\chi_c]$, where the degree of $\chi_l$ is $(-2,-d_l)$.
\end{chunk}

The operators $\chi_l$ first appeared in work of Gulliksen \cite{Gu}
as (co)homology operators (albeit in a different guise), where he
proved that $\Tor_*^R(M,N)$ is artinian over $S$ if and only if
$\Tor_*^Q(M,N)$ is artinian over $Q$.  Compare our Appendix \ref{App}.  

\begin{prop} \label{prop1} %
  Let $Q$ and $R$ be as in paragraph \ref{opening assumptions}.  For a pair
  of finitely generated graded $R$-modules $M$ and $N$, suppose the
  $Q$-module $\Tor^Q_j(M,N)$ vanishes for all $j \gg 0$ and there is a
  finite set of maximal ideals of $R$ on which the $R$-modules
  $\Tor^R_j(M,N)$ are supported for all $j \gg 0$.  Then the action of
  the Eisenbud operators induces an
  exact (Koszul) sequence
\begin{multline} %
\label{GradedKoszul} %
   0 \to \Tor_j^R(M,N)\to \bigoplus_{l=1}^c  \Tor_{j-2}^R(M,N)(-d_l) \to 
   \\  \bigoplus_{l_1 < l_2} \Tor_{j-4}^R(M,N)(-d_{l_1} - d_{l_2}) \to
    \cdots  \to \Tor_{j-2c}^R(M,N)(-d_1 - \cdots -d_c)
   \to 0
\end{multline}
of graded $R$-modules for $j \gg 0$.
\end{prop}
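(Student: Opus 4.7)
My plan is to deduce the exactness of \eqref{GradedKoszul} for $j \gg 0$ from the Shamash--Eisenbud comparison: the Koszul complex of $\chi_1,\dots,\chi_c$ acting on $\Tor^R_*(M,N)$ should appear as the $E^1$-page of a spectral sequence abutting to $\Tor^Q_*(M,N)$, which vanishes in high degree by hypothesis.

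To set up the comparison, I would fix a graded $R$-free resolution $F_\bullet \to M$ and lift it to a sequence $\tilde F_\bullet$ of graded free $Q$-modules equipped with a degree-$(-1)$ map $\tilde d$ that reduces, modulo $(f_1,\dots,f_c)$, to the differential of $F_\bullet$. Because the $f_l$'s form a $Q$-regular sequence and $\tilde d^{\,2}$ vanishes modulo them, Eisenbud's construction \cite{Ei} supplies homogeneous chain-level lifts $\tilde\chi_l\colon \tilde F_\bullet \to \tilde F_{\bullet-2}(-d_l)$ satisfying
\begin{equation*}
\tilde d^{\,2} \;=\; \sum_{l=1}^{c} f_l\,\tilde\chi_l,
\end{equation*}
whose reductions modulo $(f_1,\dots,f_c)$ are the Eisenbud operators $\chi_l$ on $F_\bullet$. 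The Shamash--Eisenbud extension then builds a graded $Q$-free resolution $T_\bullet$ of $M$ by adjoining to $\tilde F_\bullet$ graded variables dual to the $\tilde\chi_l$'s, in bidegrees $(2, d_l)$, and twisting the differential so that the relation above is absorbed.

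Tensoring $T_\bullet$ with $N$ over $Q$ and filtering by the adjoined variables yields a spectral sequence converging to $\Tor^Q_*(M,N)$ whose $E^1$-row at total homological degree $j$ is precisely the complex displayed in \eqref{GradedKoszul}, with horizontal differential induced by the $\chi_l$'s. Since $\Tor^Q_j(M,N) = 0$ for $j \gg 0$, the abutment vanishes in high degree, and exactness of \eqref{GradedKoszul} would follow if the spectral sequence already degenerates at $E^1$ there.

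The main obstacle is precisely this last step: ensuring degeneration in high degree, so that vanishing of the abutment forces exactness of the $E^1$-row rather than merely an Euler-characteristic identity. Here the finite-support hypothesis on $\Tor^R_j(M,N)$ enters crucially, making each relevant bigraded piece of finite length, and combined with Gulliksen's theorem (recalled before the statement and proved in Appendix \ref{App}), which controls $\Tor^R_*(M,N)$ as an $S = R[\chi_1,\dots,\chi_c]$-module, it forces higher differentials in the spectral sequence to vanish in sufficiently high homological degree.
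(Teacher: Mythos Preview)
Your strategy—comparing the Koszul complex \eqref{GradedKoszul} to $\Tor^Q$ via a Shamash--Eisenbud spectral sequence—is reasonable in outline, but the proof does not close. First, there is some imprecision in the setup: with the filtration you describe, the Koszul differential induced by the $\chi_l$'s is the $d_1$-differential, so what you actually need is degeneration at $E^2$ (that is, $d_r=0$ for $r\geq 2$), not at $E^1$. More seriously, the degeneration step you yourself flag as the ``main obstacle'' is never established. Your final sentence asserts that the almost-artinian property from Gulliksen/Dao ``forces higher differentials in the spectral sequence to vanish in sufficiently high homological degree,'' but no mechanism is offered: artinianity of $\bigoplus_{j\geq J}\Tor^R_j(M,N)$ over $S=R[\chi_1,\dots,\chi_c]$ is a statement about descending chains of $S$-submodules and says nothing directly about the secondary operations $d_r$, which live on subquotients of the spectral sequence. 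Without an independent argument that these higher differentials vanish, vanishing of the abutment yields only an Euler-characteristic identity—exactly the failure mode you anticipate.

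The paper bypasses the spectral sequence entirely. It uses the $\Tor^Q$-vanishing hypothesis only once, to invoke \cite[Lemma~3.2]{Dao3} (equivalently Appendix~\ref{App}), obtaining that $T=\bigoplus_{j\geq J}\Tor^R_j(M,N)$ is graded artinian over $S$ for $J\gg 0$. One then forms the Koszul complex $K=K(\chi_1,\dots,\chi_c)\otimes_S T$ directly and records two facts: (i) $H(K)$ is artinian over $S$, being a subquotient of a finite direct sum of shifts of $T$; and (ii) each $\chi_l$ annihilates $H(K)$, a general property of Koszul homology \cite[IV.A.4]{Ser}. By (ii), the descending chain $\bigoplus_{j\geq m}H(K)_j$, $m\geq 0$, consists of $S$-submodules; it intersects to zero and, by (i), stabilizes—hence is eventually zero, which is precisely the exactness of \eqref{GradedKoszul} for $j\gg 0$. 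The key ingredient your argument lacks is (ii): once the $\chi_l$'s kill the Koszul homology, artinianity finishes the proof in one stroke, with no comparison to $\Tor^Q$ needed beyond the initial appeal to Dao's lemma.
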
 

\begin{proof} % 
  Within this proof, we use a simplified grading on $S = R[\chi_1,
  \ldots, \chi_c]$, effectively ignoring the twists given by the $d_l$
  in (\ref{GradedKoszul}).  We let $R$ lie in degree $0$ and let each
  $\chi_l$ lie in degree $-2$.  By \cite[Lemma 3.2]{Dao3}, for a
  sufficiently large $J$, the module $T = \bigoplus_{j \geq J}
  \Tor^R_j(M,N)$ is graded artinian over the ring $S$.  Consider the
  Koszul complex $K = K[\chi_1, \ldots, \chi_c] \otimes_S T$.  For $j \gg
  0$, the complex (\ref{GradedKoszul}) is the $j$th graded piece of
  $K$.  We prove this complex $K$ is exact in all but finitely many
  degrees.

  As $T$ is graded artinian over $S$, the total homology module
  $H(K)$ is as well.  The descending chain of $R$-submodules
\begin{equation*} 
H(K) \supseteq \bigoplus_{j \geq 1} H(K)_j
\supseteq\bigoplus_{j \geq 2} H(K)_j \supseteq \cdots
\end{equation*}  
intersects to $0$.  Since $\chi_l H(K) = 0$, see
\cite[IV.A.4]{Ser}, these $R$-submodules are in fact
$S$-submodules.  Since $H(K)$ is artinian over $S$, the descending
chain stabilizes.  Thus there exists an $m>0$ such that $H(K)_j=0$ for
all $j \geq m$. 
\end{proof}

\begin{chunk} \label{intermediate assumptions} For the remainder of
  this section, we assume $Q = \sk[x_0, \ldots, x_{n+c-1}]$ is a
  polynomial ring over a field with each $x_i$ of degree one, $f_1,
  \dots, f_c$ is a $Q$-regular sequence of homogeneous elements, and
  $R = Q/(f_1,\dots,f_c)$.  Let $d_l = \deg(f_l)$.  In particular, $R$
  is graded.  When $M$ and $N$ are finitely generated graded
  $R$-modules, the torsion modules $\Tor^R_j(M,N)$ are also graded.
\end{chunk}

\begin{defn} \label{Gdef} %
  Let $R$ be as in paragraph \ref{intermediate assumptions}.  For
  finitely generated graded $R$-modules $M$ and $N$, and an integer
  $F$, define
$$
G_F(x,t) = \sum_{i,j\geq 0} \dim_\sk \left(\Tor^R_{F+2j}(M,N)_i \right)
t^i x^j \in
\bQ[[x,t]].
$$
\end{defn}
\begin{remark} \label{fin len M N}
Note that if for some $F \gg 0$, $\Tor_{F + 2j}^R(M,N)$ has finite
length for all $j \geq 0$, then $G_F(x,t)$ belongs to $(\bQ[t])[[x]]$.
\end{remark}

For a finitely generated graded $R$-module $T$, its {\emph{Hilbert
    series}} is
$$
H_T(t) = \sum_{i \geq 0} \dim_{\sk}(T_i) t^i.
$$
$H_T(t)$ is a rational function with a pole of order $\dim{T}$ at
$t=1$.  In fact, 
\begin{equation} \label{neweq} 
  H_T(t) = \frac{e_T(t)}{(1-t)^{\dim T}}, 
\end{equation}
where $e_T(t)$ is a Laurent polynomial \cite[(1.1)]{AB}, sometimes
called the {\it multiplicity polynomial} of $T$.  The multiplicity
polynomial of $R$ is calculated by using the presentation $R = Q/(f_1,
\ldots, f_c)$; explicitly, since $\deg(x_i) = 1$ and $\deg(f_l) =
d_l$, 
\begin{equation} \label{e_R(t)}
e_R(t)=\prod_{l=1}^c (1- t^{d_l})/(1 - t)^c.
\end{equation}

For graded $R$-modules $M$ and $N$, %
we write the Hilbert series of $\Tor_j^R(M,N)$ as $H_j(t)$ or just
$H_j$.  If $M$ and $N$ are such that the $\Tor_j^R(M, N)$ have finite
length for $j \gg 0$, then $H_j(t)$, $j \geq 0$, has the property that
the number of initial terms of $H_j(t)$ that vanish goes to infinity
as $j \to \infty$.  It thus makes sense to form the sum
$$
\sum_{j \geq 0} (-1)^j H_j(t),
$$
and, more generally, to evaluate $G_F(x,t)$ at $x=1$ (or any
constant; see Remark \ref{fin len M N}).  Observe that
\begin{equation} \label{alternating}
\sum_{j \geq 0} (-1)^j H_{j+F}(t) = G_F(1,t) - G_{F+1}(1,t),
\end{equation} 
for any $F$.
   
\begin{lemma} \label{polynomials} %
  Let $R$ be a graded ring as in paragraph \ref{intermediate
    assumptions}, and let $M$, $N$ be finitely generated graded
  $R$-modules such that $\Tor_j^R(M, N)$ has finite length for $j \gg
  0$.  For $F \gg 0$ there is a unique %
  polynomial $b_F(x,t) \in \bQ[x, t]$ such that
\begin{equation} \label{Gasratfun}
  G_F(x,t) = \frac {b_F(x,t)}{\prod_{l=1}^c (1 - t^{d_l}x)}.
\end{equation} 
For $E \gg 0$ and even there is a unique %
polynomial $\eta_{c, E}^R(M, N)(t) \in \bQ[t]$ such that
\begin{equation} \label{etacrelation}
  \sum_{j \geq E} (-1)^j H_j(t) = \frac {\eta_{c,E}^R(M,
    N)(t)}{e_R(t)(1 - t)^c} 
  \quad {\text{and}} \quad 
  \eta_{c, E}^R(M,N)(1) = 2^c \cdot c! \cdot \eta_c^R(M, N),
\end{equation} 
where $e_R(t)$ is the multiplicity polynomial of $R$ defined in
\eqref{e_R(t)}.
\end{lemma}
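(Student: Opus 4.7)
The plan is to extract from the Koszul exact sequence of Proposition~\ref{prop1} a linear recursion satisfied by the Hilbert series $H_j(t)$, deduce rationality of $G_F(x,t)$, and then specialize to $x=1$ and $t=1$ to recover Dao's invariant in the required form.

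For the first assertion, additivity of Hilbert series applied to \eqref{GradedKoszul}, combined with the shift rule $H_{T(m)}(t) = t^{-m}H_T(t)$, gives, for all $j \gg 0$,
\begin{equation*}
\sum_{k=0}^{c}(-1)^k\, e_k(t^{d_1},\ldots,t^{d_c})\, H_{j-2k}(t) \;=\; 0,
\end{equation*}
where $e_k$ is the $k$-th elementary symmetric polynomial. Since
$\prod_{l=1}^c(1-t^{d_l}x) = \sum_k(-1)^k\, e_k(t^{d_1},\ldots,t^{d_c})\,x^k$,
this recursion says precisely that the coefficient of $x^j$ in $\prod_l(1-t^{d_l}x)\cdot G_F(x,t)$ vanishes for $j \gg 0$. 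Choosing $F$ large enough that in addition every $\Tor^R_{F+2j}(M,N)$ has finite length, so that each $H_{F+2j}(t) \in \bQ[t]$, this product is a polynomial $b_F(x,t) \in \bQ[x,t]$; uniqueness is immediate because $\prod_l(1-t^{d_l}x)$ is a nonzero element of the domain $\bQ[x,t]$.

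For the second assertion, apply \eqref{alternating} with the even $F=E$ and substitute $x=1$ into \eqref{Gasratfun} to obtain
\begin{equation*}
\sum_{j \geq E}(-1)^j H_j(t) \;=\; G_E(1,t) - G_{E+1}(1,t) \;=\; \frac{b_E(1,t) - b_{E+1}(1,t)}{\prod_l(1-t^{d_l})},
\end{equation*}
then rewrite $\prod_l(1-t^{d_l}) = e_R(t)(1-t)^c$ via \eqref{e_R(t)}. Setting $\eta_{c,E}^R(M,N)(t) = b_E(1,t) - b_{E+1}(1,t) \in \bQ[t]$ delivers the first identity in \eqref{etacrelation}.

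The crux is the normalization at $t=1$. Since $\prod_l(1-t^{d_l}x)\big|_{t=1} = (1-x)^c$, we have $b_E(x,1) = (1-x)^c\, G_E(x,1)$, and $G_E(x,1) = \sum_{j\geq 0}\length(\Tor^R_{E+2j}(M,N))\,x^j$ is the generating function of the polynomial sequence $j \mapsto P_{\ev}(E/2+j)$, of degree at most $c-1$ with coefficient $a_{c-1}$ on $j^{c-1}$. A standard fact of generating functions states that if $p(j)$ is a polynomial of degree at most $c-1$ with coefficient $A$ on $j^{c-1}$, then $(1-x)^c\sum_{j\geq 0}p(j)\,x^j\big|_{x=1} = (c-1)!\,A$; applying this to $P_{\ev}$ and $P_{\odd}$ yields $\eta_{c,E}^R(M,N)(1) = (c-1)!\,(a_{c-1}-b_{c-1})$. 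Finally, the $(c-1)$-st iterated first difference of any polynomial of degree at most $c-1$ with coefficient $A$ on $j^{c-1}$ equals $(c-1)!\,A$, so the definition $\eta_c^R(M,N) = (P_{\ev}-P_{\odd})^{(c-1)}/(2^c\,c!)$ gives $2^c\,c!\cdot\eta_c^R(M,N) = (c-1)!\,(a_{c-1}-b_{c-1})$ as well, matching. The main obstacle is this final reconciliation between the $t=1$ specialization of the generating function and Dao's iterated-difference normalization; care is required when $a_{c-1}-b_{c-1}$ vanishes, but the same computation covers this edge case uniformly.
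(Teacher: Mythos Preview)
Your argument is correct and follows essentially the same route as the paper: both extract the linear recursion on Hilbert series from the exactness of \eqref{GradedKoszul}, package it as $\prod_l(1-t^{d_l}x)\,G_F(x,t)=b_F(x,t)$, define $\eta_{c,E}^R(M,N)(t)=b_E(1,t)-b_{E+1}(1,t)$, and then read off the value at $t=1$ from $b_F(x,1)=(1-x)^cG_F(x,1)$ together with the polynomial growth of $\Tor$-lengths. Your write-up is in fact more explicit than the paper's at the final step, spelling out the generating-function identity $(1-x)^c\sum_{j\ge 0}p(j)x^j\big|_{x=1}=(c-1)!\cdot[\text{lead coeff of }p]$ and the matching computation of the $(c-1)$-st iterated difference, where the paper simply asserts that $b_E(1,1)/(c-1)!$ and $b_{E+1}(1,1)/(c-1)!$ are the leading coefficients of $P_{\ev}$ and $P_{\odd}$.
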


\begin{proof}
  Let $s_0 = 1$, $s_1 = t^{d_1} + \cdots +
  t^{d_c}$, \ldots, and $s_c = t^{d_1 + \cdots + d_c}$; that is, the
  $s_k$ are elementary symmetric functions on the symbols $t^{d_l}$.
  The exactness of \eqref{GradedKoszul} for $j \gg 0$ gives the
  relation
\begin{equation*}
 s_0 H_{j}- s_1 H_{j-2} + s_2 H_{j-4}   + \cdots + (-1)^c s_cH_{j-2c} 
 = 0,
\end{equation*} 
from which it follows that, for $F \gg 0$, we have 
\begin{equation*} \label{polys1}
(1 - s_1x +s_2x^2 - \cdots + (-1)^cs_c x^c) G_F(x,t) =
 b_{0,F}(t) +b_{1,F}(t)x + \cdots + b_{c-1,F}(t) x^{c-1},
\end{equation*}
for some polynomials $b_{i,F}(t)$.  Set $b_F(x,t)$ equal to the right
hand side of this equation and use $\sum_{k=0}^c (-1)^k s_k x^k =
\prod_{l=1}^c (1 - t^{d_l}x)$. Then \eqref{Gasratfun} follows easily.

To establish \eqref{etacrelation}, observe that for $F \gg 0$, we have
$$
\frac{b_F(x,1) }{(1-x)^c} =
G_F(x,1) = \sum_{j \geq 0} \dim_{\sk} \Tor_{F + 2j}^R(M,N) x^j.
$$
Taking $E \gg 0$ to be even, set $\eta_{c, E}^R(M,N)(t)$ to be $b_E(1,
t) - b_{E+1}(1, t)$.  The first equation in \eqref{etacrelation}
follows immediately from \eqref{e_R(t)}, \eqref{alternating}, and
\eqref{Gasratfun}.

The leading coefficients of $P_{\ev}(M,N)$ and $P_{\odd}(M,N)$ are
$b_{E}(1, 1)/(c - 1)!$ and $b_{E+1}(1, 1)/(c - 1)!$, and so the value
of $\eta_{c, E}^R(M,N)(t)$ at $t = 1$ is $2^c \cdot c! \cdot
\eta_c^R(M,N)$.
\end{proof}

\begin{remark} 
  The polynomials $\eta_{c,E}^R(M,N)(t)$ depend on $E$, but, as Lemma
  \ref{polynomials} shows, they have a common value at $t=1$.
\end{remark}

%%%%%%%%%%%%%%%%%%%%%%%%%%%%%%%%%
\section{The vanishing of $\eta_c^R$}
%%%%%%%%%%%%%%%%%%%%%%%%%%%%%%%%%

Throughout the remainder of this paper, we use the following
notations and assumptions: 
\begin{equation} \label{assumptions}
  \begin{gathered}
    \begin{tabular}{l@{\hspace*{1em}}l}
      $\bullet$ & $\sk$ is a field; \\
      \begin{minipage}{.02\textwidth} $\bullet$ \linebreak \phantom{X} 
      \end{minipage}  
            & \begin{minipage}{.8\textwidth} 
              $R = \sk[x_0, \dots, x_{n+c-1}]/(f_1, \dots, f_c),$ 
              where %
              $\deg{x_i} = 1$ for all $i$ and the $f_l$'s are
              homogeneous polynomials in $\fm = (x_0, \dots,
              x_{n+c-1})$ with $d_l = \deg(f_l)$; 
              \end{minipage}  \\
       $\bullet$ &$c > 0$ and $f_1, \dots, f_c$  forms a regular
       sequence; \\      
       $\bullet$ & $X = \Proj{R}$ is a smooth $\sk$-variety. \\
    \end{tabular}
  \end{gathered}
\end{equation}

\begin{remark} % 
  Recall that the variety $X$ is smooth if and only if $\fm$ is the
  radical of the homogeneous ideal generated by the $f_l$'s and the
  maximal minors of the Jacobian matrix $(\partial f_l/\partial x_i)$.
  In particular, by the smoothness assumption, $\fm = (x_0, \dots,
  x_{n+c-1})$ is the only non-regular prime of $R$.  
\end{remark}

For a quasi-projective scheme $Z$ over a field $\sk$, we let $G(Z)$
and $K(Z)$ denote the Grothendieck groups of coherent sheaves and
locally free coherent sheaves, respectively. Recall that if $Z$ is
regular (for example, if it is smooth over $\sk$), then the canonical
map $K(Z) \to G(Z)$ is an isomorphism.  For further explanation and
discussion of the relevant groups and maps, see \cite[\S 2.1]{MPSW}.

From the assumptions (\ref{assumptions}), the smooth variety $X =
\Proj{R} \subset \bP^{n+c-1}$ has dimension $n - 1$ and degree $d=d_1
\cdots d_c$.  When $\sk$ is infinite, there is a linear rational map
$\xymatrix{\bP^{n+c-1} \ar@{-->}[r] & \bP^{n - 1}}$ that determines a
regular function on an open subset containing $X$, and hence it
induces a regular map
\begin{equation*}
 \rho \colon X \to \bP^{n-1}
\end{equation*}
that is finite, flat, and of degree $d$.  As $X$ and $\bP^{n-1}$ are
smooth and $\rho$ is finite, we have the following map and
isomorphisms:
\begin{equation*}
 \rho_* \colon \K{X} \iso G(X) \to G(\bP^{n-1}) \cong
 \K{\bP^{n-1}}.
\end{equation*}
We also have the isomorphism
$$
\bZ[t]/(1 - t)^n \cong
 \K{\bP^{n-1}}
$$
that sends $t \mapsto [\cO(-1)]$. We will often identify $\K{\bP^{n-1}}$
with $\bZ[t]/(1 - t)^n$; for example, if
$\alpha \in \K{X}$, then $\rho_*(\alpha)$ is interpreted as belonging
to $\bZ[t]/(1-t)^n$. Likewise, we identify $\K{\bP^{n-1}} \otimes_\bZ
\bQ$ with $\bQ[t]/(1 - t)^n$.

The following three results are similar to \cite[4.1, 4.2, and
4.3]{MPSW}.  We will establish the vanishing of $\eta^R_c(M, N)$ for
$c > 1$ by employing them in the same way that we used our earlier
results to show the vanishing of $\theta^R(M, N) = 2 \eta^R_1(M, N)$
when $\dim{R}$ is even \cite[Theorem 3.2]{MPSW}.

\begin{lemma} \label{carry over} \cite[Lemma 4.1]{MPSW} %
  Under the assumptions in (\ref{assumptions}) with $\sk$ infinite,
  let $M$ be a finitely generated graded $R$-module and
  $\widetilde{M}$ the associated coherent sheaf on $X$.  Then
\begin{equation*}
 \rho_*([\widetilde{M}]) = (1 - t)^n H_M(t)
\end{equation*}
in $\K{\bP^{n-1}}_\bQ = \bQ[t]/(1 - t)^n$.  In particular, 
\begin{equation*}
 \rho_*(1) = \rho_*([\cO_X]) = e_R(t) = \prod_{i=1}^{c}(1 + t + t^2 +
 \cdots + t^{d_i-1}) \in \bQ[t]/(1 - t)^n.  
\end{equation*}
\end{lemma}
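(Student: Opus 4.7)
The plan is to factor $\rho$ through the inclusion of a graded polynomial subring of $R$ and then compute both sides via a finite graded free resolution over that subring.

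First, using that $\sk$ is infinite, I would choose general linear forms $y_0,\dots,y_{n-1}\in R_1$ whose common vanishing locus in $\bP^{n+c-1}$ is disjoint from $X$; equivalently, $y_0,\dots,y_{n-1}$ is a homogeneous system of parameters for $R$. These forms determine the rational map $\bP^{n+c-1}\dashrightarrow\bP^{n-1}$ and hence the regular map $\rho$. Setting $A:=\sk[y_0,\dots,y_{n-1}]\subseteq R$, the morphism $\rho$ is induced by the graded inclusion $A\hookrightarrow R$, so on coherent sheaves pushforward along $\rho$ is restriction of scalars: $\rho_*\widetilde{M}$ is the sheaf on $\bP^{n-1}$ associated to the graded $A$-module underlying $M$. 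Since $R$ is a graded Cohen--Macaulay ring of dimension $n$ over the regular graded ring $A$ of the same dimension, the Auslander--Buchsbaum formula shows $R$ is a finite free graded $A$-module; in particular $\rho$ is finite and flat.

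Next, both $\rho_*[\widetilde{(-)}]$ and the assignment $M\mapsto (1-t)^n H_M(t)$ (viewed in $\bQ[t]/(1-t)^n$, where $t$ is a unit so negative exponents in $H_M(t)$ cause no issue) are additive on short exact sequences of graded $A$-modules and vanish on finite-length modules. Since $A$ is a polynomial ring, the underlying graded $A$-module of $M$ admits a finite graded free resolution $F_\bullet\to M$ with $F_i=\bigoplus_k A(-j_{i,k})$, and it suffices to check the formula on each $A(-j)$. For $A(-j)$ the sheafification is $\cO_{\bP^{n-1}}(-j)$, whose class in $K(\bP^{n-1})=\bZ[t]/(1-t)^n$ is $t^j$; its Hilbert series is $t^j/(1-t)^n$, so $(1-t)^n H_{A(-j)}(t)=t^j$. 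Summing alternately gives
\begin{equation*}
\rho_*[\widetilde{M}]=\sum_{i,k}(-1)^i t^{j_{i,k}}=(1-t)^n H_M(t)\quad\text{in } \bQ[t]/(1-t)^n.
\end{equation*}

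For the ``in particular'' statement, take $M=R$: since $\dim R=n$, equation \eqref{neweq} gives $(1-t)^n H_R(t)=e_R(t)$, which factors as in \eqref{e_R(t)}.

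The only real obstacle is the initial Noether-normalization step: verifying that generic linear forms (available because $\sk$ is infinite) really do yield a finite flat map $\rho$ and the corresponding graded inclusion $A\hookrightarrow R$. After that, the argument is formal bookkeeping with Hilbert series and classes in $K(\bP^{n-1})$.
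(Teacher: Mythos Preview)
Your proof is correct. The paper gives no self-contained argument here---it simply notes that the proof of \cite[Lemma~4.1]{MPSW} (the hypersurface case $c=1$) applies verbatim---and your Noether-normalization argument, realizing $\rho$ via a graded polynomial subring $A\subseteq R$ and reducing through a finite graded free $A$-resolution to the elementary check on $A(-j)$, is exactly the standard proof one expects that reference to contain.
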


\begin{proof} The proof of \cite[Lemma 4.1]{MPSW}, which is the $c=1$
  case, applies verbatim.
\end{proof}

\begin{lemma} \cite[Lemma 4.2]{MPSW} \label{familiar eqns} %
  Under the assumptions in (\ref{assumptions}) with $\sk$ infinite,
  let $M$ and $N$ be finitely generated graded $R$-modules.  For any
  sufficiently large even integer $E$ and for any integer $m \geq 0$,
  the rational function
$$
(1 - t)^{n + m - c} \frac{\eta_{c,E}^R(M,N)(t)}{(e_R(t))^2}
$$ 
does not have a pole at $t=1$. Its image in $\bQ[t]_{(t)}/(1-t)^n =
\bQ[t]/(1-t)^n = \K{\bP^{n-1}}_\bQ$ satisfies the equation
\begin{equation*}
 (1 - t)^{n + m - c} \frac {\eta_{c,E}^R(M,N)(t)}{(e_R(t))^2}
 =
 (1 - t)^m \left(
           \frac{\rho_*([\widetilde{M}])}{\rho_*(1)} \cdot 
           \frac{\rho_*([\widetilde{N}])}{\rho_*(1)}
           -
           \frac{\rho_*([\widetilde{M}]\cdot [\widetilde{N}])}{\rho_*(1)}
           \right).
\end{equation*} 
In particular, taking $m=c-1$ yields
\begin{equation*}
 (1 - t)^{n - 1} {\eta_c^R(M,N)}
 =
 \frac{(1 - t)^{c-1}}{2^c \cdot c!} \left(
           \frac{d \cdot \rho_*([\widetilde{M}])}{\rho_*(1)} \cdot 
           \frac{d \cdot \rho_*([\widetilde{N}])}{\rho_*(1)}
           -
           \frac{d^2 \cdot \rho_*([\widetilde{M}]\cdot
             [\widetilde{N}])}{\rho_*(1)} 
           \right),
\end{equation*}
where $\deg X = d = d_1 \cdots d_c$. 
\end{lemma}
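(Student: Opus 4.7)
The plan is to compute the alternating sum $\sum_{j \geq 0}(-1)^j H_j(t)$, where $H_j(t)$ denotes the Hilbert series of $\Tor_j^R(M,N)$, in two different ways and compare the results. First, choosing a graded free $R$-resolution $F_\bullet \to M$ and recalling that $H_R(t) = e_R(t)/(1-t)^n$, a standard Euler-characteristic-of-Hilbert-series calculation yields the formal power series identity
\[
\sum_{j \geq 0}(-1)^j H_j(t) = \frac{H_M(t)\,H_N(t)\,(1-t)^n}{e_R(t)}.
\]
Splitting this as $\sum_{j < E} + \sum_{j \geq E}$ and substituting \eqref{etacrelation} for the tail produces
\[
\frac{\eta_{c,E}^R(M,N)(t)}{(e_R(t))^2} = \frac{H_M(t)\,H_N(t)\,(1-t)^{n+c}}{(e_R(t))^2} - \frac{(1-t)^c \sum_{j=0}^{E-1}(-1)^j H_j(t)}{e_R(t)}.
\]

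Next, the plan is to pass to $\K{\bP^{n-1}}_\bQ = \bQ[t]/(1-t)^n$ after multiplying this identity by $(1-t)^{n+m-c}$. By Lemma \ref{carry over}, the first term on the right becomes $(1-t)^m \cdot \bigl(\rho_*([\widetilde M])/\rho_*(1)\bigr) \cdot \bigl(\rho_*([\widetilde N])/\rho_*(1)\bigr)$. For the second term I would use that, on the smooth variety $X$, the $K$-theory product equals the derived tensor product, so
\[
[\widetilde M] \cdot [\widetilde N] = \sum_{j \geq 0}(-1)^j \bigl[\widetilde{\Tor_j^R(M,N)}\bigr] \in \K{X},
\]
the identification being valid because sheafifying a graded free $R$-resolution of $M$ yields a locally free resolution of $\widetilde M$ on $X = \Proj{R}$. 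For $j \geq E$ the module $\Tor_j^R(M,N)$ has finite length, hence is supported at $\fm$ and sheafifies to zero on $X$, so only the terms with $j < E$ survive. Applying $\rho_*$ termwise and using Lemma \ref{carry over} gives $\rho_*([\widetilde M]\cdot[\widetilde N]) = (1-t)^n \sum_{j=0}^{E-1}(-1)^j H_j(t)$, which matches the required second term up to the factor $(1-t)^m/e_R(t)$.

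Assembling these two identifications yields the first displayed equation of the lemma, and the absence of a pole at $t=1$ is automatic because the right-hand side is manifestly an element of $\bQ[t]/(1-t)^n$. The special case $m = c-1$ is obtained by multiplying both sides by $d^2 = e_R(1)^2$, dividing by $2^c \cdot c!$, and invoking the relation $\eta_{c,E}^R(M,N)(1) = 2^c \cdot c! \cdot \eta_c^R(M,N)$ from Lemma \ref{polynomials} together with the observation that $(1-t)^{n-1} p(t) \equiv (1-t)^{n-1} p(1) \pmod{(1-t)^n}$ for any polynomial $p$.

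The main obstacle is purely one of bookkeeping: one must verify that the formal power-series identities descend to honest equalities of rational functions, that the specialization at $x=1$ implicit in passing from $G_F(x,t)$ to the alternating sum is legitimate (which relies on the eventual finite length of $\Tor_j^R(M,N)$; see Remark \ref{fin len M N}), and that reducing modulo $(1-t)^n$ produces the claimed identity in $\bQ[t]/(1-t)^n$. All of these technical points are handled in the $c=1$ analogue \cite[Lemma 4.2]{MPSW}, and the same argument adapts to general $c$ with no new ideas required, merely the replacement of the hypersurface expression for $e_R(t)$ by the product formula \eqref{e_R(t)}.
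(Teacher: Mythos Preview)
Your proposal is correct and follows essentially the same approach as the paper's proof: both start from the Avramov--Buchweitz identity $H_M H_N/H_R = \sum_j (-1)^j H_j$, split off the tail via \eqref{etacrelation}, multiply through by $(1-t)^{n+m}/e_R(t)$, and then reduce modulo $(1-t)^n$ using Lemma~\ref{carry over} to identify the finite alternating sum with $\rho_*([\widetilde M]\cdot[\widetilde N])/\rho_*(1)$. One small point of care: the no-pole assertion must logically \emph{precede} the passage to $\bQ[t]/(1-t)^n$, so you should deduce it from the rational-function form of the right-hand side (each $(1-t)^n H_T$ is a Laurent polynomial by \eqref{neweq} and $e_R(1)=d\ne 0$) rather than from the already-reduced $K$-theoretic expression, which would be circular.
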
   

\renewcommand{\Hs}[2][]{\operatorname{H}_{#1}}

\begin{proof}
  As in \cite[Lemma 4.2]{MPSW}, start with the equation of Hilbert
  series from \cite[Lemma 7]{AB}, namely
\begin{equation*}
  \frac {H_M H_N}{H_R} = \sum_{j \geq 0} (-1)^j H_j.  
\end{equation*}
Splitting the sum at $E \gg 0$ and using the first relation in
\eqref{etacrelation}  gives
\begin{equation*}
 \frac {H_M \cdot H_N}{H_R} =
 \sum_{j=0}^{E-1} (-1)^j H_j + \frac{\eta_{c,E}^R(M,N)(t)}{e_R(t)
   \cdot (1 - t)^c}.    
\end{equation*}
Upon multiplying  by $(1-t)^m/H_R = (1 - t)^{n+m}/e_R(t)$
and rearranging, this yields
\begin{equation} \label{obstruction eqn}
 \begin{gathered}
  (1-t)^m \frac{(1 - t)^n H_M}{e_R(t)} \cdot \frac{(1 - t)^n H_N}{e_R(t)}
  - 
  (1-t)^m \sum_{j=0}^{E-1}(-1)^j \frac{(1-t)^n H_j}{e_R(t)} 
  \\
  = \frac{\eta_{c, E}^R(M,N)(t)}{(e_R(t))^2} (1-t)^{n+m-c}.   
 \end{gathered}
\end{equation}

The first assertion follows from the fact that the expression before
the equality in (\ref{obstruction eqn}) does not have a pole at $t=1$,
using \eqref{neweq}.  Since both sides of this equation are power
series in powers of $1-t$, we may take their images in ${\bQ[t]/(1 -
  t)^n}$.  Apply Lemma \ref{carry over}.  The expression before the
equality in \eqref{obstruction eqn} becomes
\begin{equation*}
  (1-t)^m \frac{\rho_*([\widetilde{M}])}{\rho_*(1)}
  \frac{\rho_*([\widetilde{N}])}{\rho_*(1)} 
  -
  (1-t)^m \sum_{j=0}^{E-1} (-1)^j 
  \frac{\rho_*([\widetilde{\Tor_j^R(M, N)}])}{\rho_*(1)}.
\end{equation*} 
If $E$ is large enough so that $\Tor_j^R(M,N)$ has finite length for
$j \geq E$, then the alternating sum is $\rho_*([\widetilde{M}] \cdot
[\widetilde{N}])/\rho_*(1)$ where $[\widetilde{M}] \cdot
[\widetilde{N}]$ is multiplication in the ring $K(X)$.  This gives the
first equation in the Lemma.

For the second equation, set $m = c - 1$.  Define $g(t) = \eta_{c,
  E}^R(M, N)(t)/(e_R(t))^2$.  As $e_R(1) = d$ and $\eta_{c,
  E}^R(M,N)(t)$ is a polynomial, $g(t)$ is a rational function without
a pole at $t = 1$.  Thus, modulo $(1 - t)^n$,
\begin{equation*}
 g(t) (1 - t)^{n-1} = g(1)(1 - t)^{n-1} + 
 \frac{g(t) - g(1)}{1 - t} (1 - t)^n \equiv g(1) (1 - t)^{n-1}. 
\end{equation*} 
Multiplication by $d^2$ establishes the second equation.  
\end{proof}

%%% put \Hs back the way it was
\renewcommand{\Hs}[2][]{\operatorname{H}_{#1}(#2)}
%%%

In the next lemma and in the proof of our main theorem, we use \'etale
cohomology.  Assume $\sk$ is a separably closed field, fix a prime
$\ell \neq$ char $\sk$, and write $\H^j_{\et}(Z, \bQ_\ell(i))$ for the
\'etale cohomology of a scheme $Z$ with coefficients in $\bQ_\ell(i)$.
In addition, write $\H^{2*}_{\et}(Z, \bQ_\ell(*))$ for $\bigoplus_i
\H^{2i}_{\et}(Z, \bQ_\ell(i))$.  This is a commutative ring under cup product.
Moreover, the \'etale Chern character gives a ring
homomorphism
\begin{equation*}
  \ch_{\et} \colon \K{Z}_\bQ \to \H_{\et}^{2*}(Z, \bQ_{\ell}(*)).
\end{equation*}
We refer the reader to \cite{FK} for additional background.

\begin{lemma} \cite[Lemma 4.3]{MPSW} \label{familiar diag} %
  Under the assumptions in (\ref{assumptions}) with $\sk$ separably
  closed, the diagram
\begin{equation} \label{ECH}
 \begin{gathered}
 \xymatrix{
 \K{X}_\bQ \ar[rr]^-{\ds \frac{d}{\rho_*(1)} \cdot \rho_*} 
        \ar[d]_-{\ds \ch_{\et}} 
  & & \K{\bP^{n-1}}_\bQ \ar[d]^-{\ds \ch_{\et}} \\
 \H_{\et}^{2*}(X, \bQ_\ell(*))
  \ar[rr]_-{\ds \rho_*^{\et}} 
  & & \H_{\et}^{2*}(\bP^{n-1}, \bQ_\ell(*))
  }
 \end{gathered}
\end{equation}
commutes, where $\rho_*^{\et}$ is the push-forward map for \'etale
cohomology.
\end{lemma}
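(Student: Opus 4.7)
The plan is to invoke \'etale Grothendieck--Riemann--Roch for the finite flat morphism $\rho\colon X\to\bP^{n-1}$ and to observe that the resulting Todd correction is pulled back from $\bP^{n-1}$, so that the projection formula absorbs it into the scaling factor $d/\rho_*(1)$. Since $X$ and $\bP^{n-1}$ are both smooth, the virtual class $[T_\rho] := [T_X] - \rho^*[T_{\bP^{n-1}}] \in \K{X}$ has a well-defined Todd class $\Td{T_\rho}\in\H^{2*}_\et(X,\bQ_\ell(*))$, and GRR provides
\[
\ch_\et(\rho_*\alpha) = \rho_*^\et\bigl(\ch_\et(\alpha)\cdot\Td{T_\rho}\bigr)\qquad(\alpha\in\K{X}_\bQ).
\]

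The central calculation is to exhibit $\Td{T_\rho}$ as a pullback from $\bP^{n-1}$. Using that $X$ is cut out of $\bP^{n+c-1}$ by a regular sequence of degrees $d_1,\dots,d_c$, so that $N^\vee_{X/\bP^{n+c-1}}=\bigoplus_{l=1}^c\cO_X(-d_l)$, and combining the Euler sequences for $T_{\bP^{n+c-1}}$ and $T_{\bP^{n-1}}$ with the identity $\rho^*H'=H$ (where $H'$ and $H$ denote the respective hyperplane classes; this holds because $\rho$ is the restriction of a linear projection), I would compute
\[
\Td{T_\rho} = \frac{1}{d}\prod_{l=1}^c\bigl(1+e^{-H}+\cdots+e^{-(d_l-1)H}\bigr) = \rho^*\tau'
\]
for a specific $\tau'\in\H^{2*}_\et(\bP^{n-1},\bQ_\ell(*))$. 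The essential point is that the right-hand side is literally $\rho^*$ of something on the base.

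With $\Td{T_\rho}=\rho^*\tau'$ in hand, the projection formula finishes the proof. Taking $\alpha=1$ in GRR and using $\rho_*^\et(1_X)=d$ gives $\ch_\et(\rho_*(1))=d\tau'$, hence $\ch_\et(d/\rho_*(1))=1/\tau'$; for general $\alpha$,
\[
\ch_\et\!\Bigl(\tfrac{d}{\rho_*(1)}\cdot\rho_*\alpha\Bigr) = \tfrac{1}{\tau'}\cdot\rho_*^\et\bigl(\ch_\et(\alpha)\cdot\rho^*\tau'\bigr) = \rho_*^\et(\ch_\et(\alpha)).
\]
The main obstacle I anticipate is arranging \'etale GRR in a form applicable here---for a finite flat morphism between smooth varieties over a separably closed field, with $\bQ_\ell$-coefficients---and verifying the Todd class identity on the nose; both of these should parallel the $c=1$ argument in \cite{MPSW} with only notational changes.
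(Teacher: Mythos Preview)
Your proposal is correct and is essentially the same approach as the paper's: the paper's proof is a one-line referral to \cite{MPSW}, asserting that the $c=1$ argument there applies verbatim, and your sketch (Grothendieck--Riemann--Roch for the finite flat map $\rho$, the explicit computation showing $\Td{T_\rho}=\frac{1}{d}\prod_l(1+e^{-H}+\cdots+e^{-(d_l-1)H})$ is pulled back from $\bP^{n-1}$, and the projection formula to absorb the Todd factor into $d/\rho_*(1)$) is precisely that argument carried out for general $c$. You have correctly anticipated both the structure and the one place where care is needed, namely the availability of \'etale GRR in this setting.
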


\begin{proof} The proof of \cite[Lemma 4.4]{MPSW}, which is the $c=1$
  case, applies verbatim.
\end{proof}

The following is the main result of this paper.

\begin{theorem} \label{etacvanish} %
  Under the assumptions in (\ref{assumptions}) with $\sk$ separably
  closed and infinite, let $M$ and $N$ be finitely generated graded
  $R$-modules.  For $E$ a sufficiently large even integer, $\eta_{c,
    E}^R(M, N)(t)$ has a zero at $t=1$ of order at least $c - 1$.  In
  particular, $\eta_c^R(M, N) = 0$ for $c > 1$.
\end{theorem}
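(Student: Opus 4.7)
The plan is to show that the element
$$
Z := \frac{d \cdot \rho_*([\widetilde{M}])}{\rho_*(1)} \cdot \frac{d \cdot \rho_*([\widetilde{N}])}{\rho_*(1)} - d \cdot \frac{d \cdot \rho_*([\widetilde{M}] \cdot [\widetilde{N}])}{\rho_*(1)} \in \bQ[t]/(1-t)^n
$$
lies in the top filtered piece $(1-t)^{n-1}\bQ[t]/(1-t)^n$. Assuming this, Lemma \ref{familiar eqns} delivers the theorem: its $m=0$ instance reads $(1-t)^{n-c} h(t) = Z/d^2$ in $\bQ[t]/(1-t)^n$ with $h(t) := \eta_{c,E}^R(M,N)(t)/(e_R(t))^2$, and matching coefficients of $(1-t)^k$ on both sides translates the top-filtered containment of $Z$ into the vanishing of the first $c-1$ Taylor coefficients of $h(t)$ at $t=1$, i.e., $\eta_{c,E}^R(M,N)(t)$ vanishes to order at least $c-1$ there. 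The \emph{In particular} clause follows from the second identity $(1-t)^{n-1}\eta_c^R(M,N) = \frac{(1-t)^{c-1}}{2^c c!} Z$ of the same lemma, since $(1-t)^{c-1} Z \in (1-t)^{n+c-2}\bQ[t]/(1-t)^n = 0$ whenever $c \geq 2$.

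To verify the top-filtered containment of $Z$, I would apply the \'etale Chern character and invoke Lemma \ref{familiar diag}. Under the identification $\H_{\et}^{2*}(\bP^{n-1},\bQ_\ell(*)) \cong \bQ_\ell[\xi]/\xi^n$ (with $\xi$ the hyperplane class), the $(1-t)$-adic filtration on $K(\bP^{n-1})_\bQ$ corresponds to the $\xi$-adic filtration, so it suffices to show that
$$
\ch_{\et}(Z) = \rho_*^{\et}(a)\cdot \rho_*^{\et}(b) - d\cdot \rho_*^{\et}(a\cdot b)
$$
has only its top component (in degree $2(n-1)$) possibly nonzero, where $a = \ch_{\et}([\widetilde{M}])$ and $b = \ch_{\et}([\widetilde{N}])$ lie in $\H_{\et}^{2*}(X,\bQ_\ell(*))$.

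The key geometric input is the Lefschetz hyperplane theorem applied to the smooth complete intersection $X \subset \bP^{n+c-1}$ of dimension $n-1$: each group $\H_{\et}^{2k}(X, \bQ_\ell(k))$ with $2k \neq n-1$ is one-dimensional and generated by $\xi_X^k = \rho^*(\xi^k)$, where $\xi_X$ is the hyperplane class on $X$. For components $a_i, b_j$ in such degrees, the projection formula together with $\rho_*^{\et}\rho^* = d\cdot \operatorname{id}$ delivers a clean cancellation $\rho_*^{\et}(a_i)\cdot \rho_*^{\et}(b_j) = d\cdot \rho_*^{\et}(a_i\cdot b_j)$. The only potentially nonzero contributions to $\ch_{\et}(Z)$ come from the middle-degree pieces $a_{k_0}, b_{k_0} \in \H_{\et}^{n-1}(X, \bQ_\ell(k_0))$, which exist only when $n$ is odd with $k_0 = (n-1)/2$, and which decompose via the Lefschetz decomposition as $a_{k_0} = \lambda_a \xi_X^{k_0} + p_a$ and $b_{k_0} = \lambda_b \xi_X^{k_0} + p_b$ with $p_a, p_b \in P^{n-1}(X)$ primitive.

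The main obstacle is to manage these middle-degree contributions; this will rest on the defining property $\xi_X \cdot p = 0$ of a primitive class $p \in P^{n-1}(X)$. The projection formula gives $\rho_*^{\et}(p)\cdot \xi^{k_0} = \rho_*^{\et}(p\cdot \xi_X^{k_0}) = 0$, and since multiplication by $\xi^{k_0}$ sends $\H_{\et}^{n-1}(\bP^{n-1},\bQ_\ell(k_0)) = \bQ_\ell \xi^{k_0}$ injectively into $\H_{\et}^{2(n-1)}(\bP^{n-1},\bQ_\ell(n-1))$, this forces $\rho_*^{\et}(p_a) = \rho_*^{\et}(p_b) = 0$. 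Meanwhile every cross-degree product $a_{k_0}\cdot b_j$ (resp.\ $a_i \cdot b_{k_0}$) with $i, j \neq k_0$ collapses via $\xi_X\cdot p_a = 0$ (resp.\ $\xi_X\cdot p_b = 0$) to a pure pullback-class computation that cancels as above, and in $a_{k_0}\cdot b_{k_0}$ itself the cross terms $\lambda_a \xi_X^{k_0}\cdot p_b$ and $p_a\cdot \lambda_b \xi_X^{k_0}$ vanish for the same reason, leaving only classes in the top degree $\H_{\et}^{2(n-1)}(X,\bQ_\ell(n-1))$. Pushing forward, $\ch_{\et}(Z)$ is concentrated in degree $2(n-1)$, which is exactly what was needed.
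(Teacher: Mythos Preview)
Your proof is correct and follows the same overall strategy as the paper's: pass via Lemmas \ref{familiar eqns} and \ref{familiar diag} to an \'etale-cohomology computation, then use the weak Lefschetz theorem for the smooth complete intersection $X$ to see that only the middle-degree components of $\ch_{\et}([\widetilde M])$ and $\ch_{\et}([\widetilde N])$ can contribute nontrivially. The execution of the final step differs slightly. The paper works with the $m\geq 1$ instances of Lemma \ref{familiar eqns} and observes that the bilinear form $(\alpha,\beta)\mapsto \rho_*^{\et}(\alpha)\rho_*^{\et}(\beta)-d\,\rho_*^{\et}(\alpha\beta)$ vanishes whenever \emph{either} factor lies in the image of $\rho^*_{\et}$ (projection formula plus $\rho_*^{\et}(1)=d$); this reduces in one stroke to $\alpha,\beta$ both in middle degree $n-1$, whereupon the expression lies in top degree $2(n-1)$ and is killed by the factor $\gamma^m$. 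You instead work at $m=0$, introduce the Lefschetz primitive decomposition, and establish the auxiliary fact $\rho_*^{\et}(p)=0$ for primitive $p$ in order to handle the cross-degree terms by hand. The paper's bilinearity shortcut is a bit cleaner---in particular it makes the cross-degree cancellation immediate without any appeal to primitivity (note that your phrase ``collapses via $\xi_X\cdot p_a=0$'' does not literally cover the $j=0$ cross term, though that case still goes through using your $\rho_*^{\et}(p_a)=0$)---but your route is equally valid and reaches the same conclusion.
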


\begin{proof}
  The claim is vacuous if $c \leq 1$, and so assume $c > 1$.  Let
  $\gamma$ be the element $\ch_{\et}(1 - t)$ of
  $\H_{\et}^{2*}(\bP^{n-1}, \bQ_\ell(*))$.

We apply $\ch_{\et}$ to $d^2$ times the first equation in Lemma
  \ref{familiar eqns} and use the commutative diagram in Lemma
  \ref{familiar diag} to obtain
\begin{equation} \label{etacchern}
 \begin{gathered}
   \frac{d^2 \cdot  \ch_{\et}\left((1-t)^{n+m-c} \cdot \eta_{c,E}^R(M,
     N)(t)\right)}{\ch_{\et}((e_R(t))^2)}=
   \\
   \gamma^m\left(\rho_*^{\et}\ch_{\et}([\widetilde{M}]) \cdot
     \rho_*^{\et}\ch_{\et}([\widetilde{N}]) - d \cdot \rho_*
     ^{\et}\Big(\ch_{\et}([\widetilde{M}]) \cdot
     \ch_{\et}([\widetilde{N}]) \Big) \right).
 \end{gathered}
 \end{equation}

For $\alpha, \beta \in \H_{\et}^{2*}(X, \bQ_\ell(*))$, define
\begin{equation*}
  \Psi_m(\alpha, \beta) = \gamma^m \big(\rho_*^{\et}(\alpha) \cdot
  \rho_*^{\et}(\beta) - d \cdot \rho_*^{\et}(\alpha \cdot \beta)
  \big).
\end{equation*}  

We will prove that $\Psi_m$ vanishes for any $m \geq 1$. Using the
projection formula 
\begin{equation*}
  \rho_*^{\et}(\rho^*_{\et}(\alpha') \cdot \omega)=\alpha' \rho_*^{\et}(\omega)
\end{equation*} 
with $\omega = 1$, and using the fact that $\rho_*^{\et}(1) = d$, we
see that if $\alpha = \rho^*_{\et}(\alpha')$ for some $\alpha' \in
\H_{\et}^{2*}(\bP^{n-1}, \bQ_\ell(*))$, then
\begin{equation*}
  \Psi_m(\alpha, \beta) = 
  \gamma^m (\alpha' \rho_*^{\et}(1) \rho_*^{\et}(\beta) - d  \cdot
  \alpha' \rho_*^{\et}(\beta)) = 0.
\end{equation*}
Likewise, $\Psi_m(\alpha, \beta) = 0$ if $\beta =
\rho^*_{\et}(\beta')$. But since $X$ is a complete intersection in
projective space, the map
\begin{equation*}
\rho^*_{\et}: \H_{\et}^{2j}(\bP^{n-1}, \bQ_\ell(j)) \to \H_{\et}^{2j}(X, \bQ_\ell(j))
\end{equation*} 
is an isomorphism in all degrees except possibly in degree $2j = n-1$
(see \cite[XI.1.6]{SGA7}).  So we may assume $n$ is odd and that
$\alpha, \beta \in \Het^{n-1}(X, \bQ_\ell(\frac{n-1}{2}))$. Noticing
that $\gamma$ is in $\bigoplus_{i \geq 1} \H_{\et}^{2i}(X,
\bQ_\ell(i))$, we see that $\gamma^m \rho_*^{\et}(\alpha) \cdot
\rho_*^{\et}(\beta)$ and $\gamma^m d \cdot \rho_*^{\et}(\alpha \cdot
\beta)$ belong to
\begin{equation*}
\bigoplus_{j \geq 0} \H_{\et}^{2n-2+2m+2j}(X, \bQ_\ell(n-1+m+j)).
\end{equation*}
This group vanishes when $m \geq 1$ because $\dim(X)=n-1$ and \'etale
cohomology vanishes in degrees more than twice the dimension of a
smooth variety over a separably closed field \cite[X.4.3]{SGA4}.  As
$\Psi_m$ is zero for $m \geq 1$, so too is the expression on the
left-hand side of \eqref{etacchern}.

We have proven that for $m \geq 1$, 
$$
\frac{d^2 \cdot \ch_{\et}((1-t)^{n+m-c} \cdot \eta_{c,E}^R(M,N)(t))}
{\ch_{\et}(e_R(t))^2} = 0
$$
and hence 
$$
 \ch_{\et}((1-t)^{n+m-c} \cdot \eta_{c,E}^R(M,N)(t))  = 0.
$$
But the Chern character map with $\bQ_\ell$ coefficients induces an
isomorphism on projective space,
$$
\ch_{\et}: K(\bP^{n-1}) \otimes \bQ_\ell \xra{\cong}
\H_{\et}^{2*}(\bP^{n-1}, \bQ_\ell(*)), 
$$
and therefore, $(1 - t)^{n + m - c} \cdot \eta_{c, E}^R(M, N)(t) = 0$
in $\bQ[t]/(1 - t)^n$. That is, $(1 - t)^n$ divides $(1 - t)^{n + m -
  c} \cdot \eta_{c, E}^R(M, N)(t)$ in $\bQ[t]$ so that $\eta_{c,
  E}^R(M, N)(t) /(1-t)^{c-m}$ is a polynomial.  Taking $m = 1 <c$
proves the Theorem.
\end{proof}

The familiar example below shows that $\eta_1^R(M, N)$ need not
vanish, even under the assumptions (\ref{assumptions}), if $\dim R$ is
odd.

\begin{example} \cite[Example 1.5]{Ho} \label{exnonzero} %
  The assumption that $c > 1$ is necessary in the Theorem. Let $R =
  \bC[x,y,u,v]/(xu + yv)$ and set $M = R/(x, y)$, $N = R/(u, v)$, and
  $L = R/(x, v)$.  Then $\eta_1^R(M, M) = \frac{1}{2}$, $\eta_1^R(M,
  N) = \frac{1}{2}$, and $\eta_1^R(M, L) = -\frac{1}{2}$.
\end{example}

\begin{corollary} \label{nograding} %
  Under the assumptions in (\ref{assumptions}) and for every pair of
  finitely generated, but not necessarily graded, $R$-modules $M$ and
  $N$, if $\dim R > 0$, then $\eta_c^R(M,N)$ vanishes when $c > 1$.
  When $\dim R = 0$, then $\eta_c^R(M,N)$ vanishes for all $c$.
  \end{corollary}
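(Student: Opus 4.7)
The plan is to reduce to the graded case of Theorem~\ref{etacvanish} via two steps: (i) localization at $\fm$, and (ii) replacing arbitrary modules by their associated gradeds via a Rees-module deformation.

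For step (i), under \eqref{assumptions} the ring $R$ is an isolated complete intersection singularity at $\fm$, so $\Tor^R_j(M, N)$ has finite length for $j \gg 0$, and
$$\length_R \Tor^R_j(M, N) = \length_{R_\fm} \Tor^{R_\fm}_j(M_\fm, N_\fm).$$
Hence $\eta_c^R(M,N) = \eta_c^{R_\fm}(M_\fm, N_\fm)$. The hypothesis in Theorem~\ref{etacvanish} that $\sk$ be separably closed and infinite is ensured by a faithfully flat base change $R \to R \otimes_\sk \sk'$, which preserves $\eta_c$ up to a harmless common scaling of numerators and denominators in the limit formula of Remark~\ref{eta&theta}.

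For step (ii), for any fg $R_\fm$-module $A$, form the Rees module $A_R = \bigoplus_{n \geq 0} \fm^n A \cdot t^n$, a fg $R_\fm[t]$-module, flat over $\sk[t]$, whose fiber at $t = 1$ is $A$ and whose fiber at $t = 0$ is the associated graded module $\operatorname{gr}_\fm A$, a fg graded $R$-module (since $\operatorname{gr}_{\fm R_\fm}(R_\fm) \cong R$). A spectral-sequence argument comparing the Tor of $A, B$ to the Tor of $\operatorname{gr}_\fm A, \operatorname{gr}_\fm B$ should show that the leading asymptotic of $\length \Tor^{R_\fm}_j(A, B)$ matches that of $\length \Tor^R_j(\operatorname{gr}_\fm A, \operatorname{gr}_\fm B)$, giving $\eta_c^R(A, B) = \eta_c^R(\operatorname{gr}_\fm A, \operatorname{gr}_\fm B) = 0$ by Theorem~\ref{etacvanish}, provided $c > 1$ and $\dim R > 0$.

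For $\dim R = 0$, $R$ is Artinian local with residue field $\sk$, and every fg $R$-module has a composition series with simple quotients $\sk$. Biadditivity of $\eta_c$ on short exact sequences (immediate from the limit formula in Remark~\ref{eta&theta} together with the long exact sequence of $\Tor$, whose boundary contributions are $O(1)$ and hence vanish after dividing by $n^c$) reduces the problem to $\eta_c^R(\sk, \sk)$. For $c > 1$, Theorem~\ref{etacvanish} applied to the graded module $\sk$ gives $\eta_c^R(\sk, \sk) = 0$; for $c = 1$, we have $R = \sk[x]/(x^d)$, and the periodic minimal free resolution of $\sk$ yields $\Tor^R_j(\sk, \sk) \cong \sk$ in every degree, so $\theta^R(\sk, \sk) = 0$.

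The main obstacle is step (ii): the standard Tor spectral sequence produces only the pointwise inequality $\length \Tor^{R_\fm}_j(A, B) \leq \length \Tor^R_j(\operatorname{gr}_\fm A, \operatorname{gr}_\fm B)$, and one must argue that the discrepancy between the two sides grows at strictly lower order in $j$ than the leading coefficient defining $\eta_c$, so that passage to the associated graded preserves the invariant.
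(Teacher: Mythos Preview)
Your step~(ii) is where the argument breaks down, and you correctly flag it. The filtered spectral sequence yields at best the inequality $\length \Tor_j^{R_\fm}(A,B) \le \length \Tor_j^{R}(\operatorname{gr}_\fm A, \operatorname{gr}_\fm B)$, and nothing in your sketch forces the discrepancy to be $o(j^{c-1})$: the higher differentials can in principle kill classes of the same polynomial order as the leading term. The phrase ``should show'' is doing all the work; the equality $\eta_c^{R_\fm}(A,B) = \eta_c^R(\operatorname{gr}_\fm A, \operatorname{gr}_\fm B)$ remains unestablished, and I see no direct way to salvage it.

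The paper bypasses any associated-graded comparison. Since $\eta_c^R$ is biadditive \cite[Theorem~4.3]{Dao3}, it descends to a bilinear pairing on $G(R)_\bQ$, and it suffices to check vanishing on a spanning set. For $\dim R > 0$, the map $K(X)_\bQ \to G(R)_\bQ$ sending $[\widetilde T] \mapsto [T]$ is surjective by \cite[(2.4)]{MPSW}, so $G(R)_\bQ$ is spanned by classes of \emph{graded} modules and Theorem~\ref{etacvanish} finishes the case $c > 1$ immediately. For $\dim R = 0$, one has $[R] = \dim_\sk(R)\cdot[\sk]$ in $G(R)_\bQ$, so every class is a rational multiple of $[R]$; since $\eta_c^R(R,R) = 0$, bilinearity gives vanishing for all $c$ at once, avoiding your case split and any appeal to Theorem~\ref{etacvanish} at $n=0$ (where $X$ is empty and the proof's geometry degenerates). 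Your base change to a separably closed field is exactly what the paper does; the localization at $\fm$ is harmless but unnecessary in this $G$-theory approach.
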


\begin{proof} 
  Upon passing to any faithfully flat field extension $\sk'$ of $\sk$,
  the assumptions \eqref{assumptions} remain valid, and, moreover, for
  finitely generated $R$-modules $M$ and $N$, the value of
  $\eta_c^{R}(M,N)$ is unchanged.  In more detail, since the lengths
  involved are dimensions over the field $\sk$ for $\eta_c^R$ and
  dimensions over the field $\sk'$ for $\eta_c^{R \otimes_\sk \sk'}$,
  we have equality
\begin{equation*} 
  \eta_c^{R}(M,N) = \eta_c^{R \otimes_\sk \sk'}(M \otimes_\sk
  \sk', N \otimes_\sk \sk').
\end{equation*}
In particular, by passing to the separable closure of $\sk$, we may
assume that $\sk$ is separably closed.

Since $\eta_c^R(-,-)$ is biadditive \cite[Theorem 4.3]{Dao3} and
defined for all pairs of finitely generated $R$-modules, it follows
that $\eta_c^R$ determines a bilinear pairing on $G(R)$, and hence on
$\G{R}_{\bQ}:=\G{R} \otimes_\bZ \bQ$.  It suffices to prove that this
latter pairing is zero.

Assume $\dim R > 0$.  From \cite[Section 2.1]{MPSW}, we recall the
mapping from $\K{X}_{\bQ}$ to $\G{R}_{\bQ}$ given as follows: if $T$
is a finitely generated graded $R$-module with associated coherent
sheaf $\widetilde{T}$ on $X$, then $\K{X}_{\bQ} \to \G{R}_{\bQ}$ sends
$\smash{[\widetilde{T}] \in \K{X}_\bQ}$ to $[T] \in \G{R}_{\bQ}$.  As
proven in \cite[(2.4)]{MPSW}, this mapping is onto, and hence the
vector space $\G{R}_{\bQ}$ is spanned by classes of {\em graded}
$R$-modules.  Therefore, Theorem \ref{etacvanish} applies to prove the
pairing on $\G{R}_{\bQ}$ induced by $\eta_c^R$ is the zero pairing.

Finally, if $\dim R = 0$, then $[R] = \dim_{\sk}(R) \cdot [\sf k]$ in
$\G{R}_\bQ$, and hence $[M] = \length(M) \cdot [\sf k]$ in
$\G{R}_\bQ$.  Since $\eta_c^R(R,R) = 0$ as $R$ is projective, it
follows that $\eta_c^R$ vanishes for all pairs.
\end{proof} 

\begin{corollary} \label{Daostuff} %
  With the assumptions in (\ref{assumptions}), let $M$ and $N$ be
  finitely generated, but not necessarily graded, $R$-modules.  Then
  for $c > 1$, the pair $(M,N)$ is $c$-$\Tor$-rigid; that is, if $c$
  consecutive torsion modules $\Tor_i^R(M, N), \ldots,$
  $\Tor_{i+c-1}^R(M,N)$ all vanish for some $i \geq 0$, then
  $\Tor_j^R(M,N) = 0$ for $j \geq i$.
\end{corollary}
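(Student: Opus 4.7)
The plan is to deduce the corollary quickly by combining Corollary \ref{nograding} with Dao's criterion for $c$-$\Tor$-rigidity, once we check the hypothesis that $\eta_c^R(M,N)$ is defined.

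First I would verify that, under the assumptions \eqref{assumptions}, $\length \Tor_j^R(M,N) < \infty$ for $j \gg 0$ for any pair $(M,N)$ of finitely generated $R$-modules. By the smoothness of $X = \Proj R$, every prime $\fp \ne \fm$ of $R$ is regular, so $R_\fp$ has finite global dimension and $\Tor_j^R(M,N)_\fp = 0$ for $j \gg 0$. Hence $\Tor_j^R(M,N)$ is supported only at $\fm$ for $j \gg 0$ and therefore has finite length. This places us in the set-up of Proposition \ref{P's}, so $\eta_c^R(M,N)$ is defined.

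Next I would invoke Corollary \ref{nograding}, which gives $\eta_c^R(M,N) = 0$ for $c > 1$ (regardless of whether $\dim R$ is positive or zero, since in the zero-dimensional case $\eta_c^R$ vanishes for all $c$, and in the positive-dimensional case it vanishes for $c > 1$). Finally, since $\eta_c^R(M,N) = 0$, Dao's theorem \cite[Theorem 6.3]{Dao3} — already cited in the introduction as the link between vanishing of $\eta_c^R$ and $c$-$\Tor$-rigidity — yields directly that the pair $(M,N)$ is $c$-$\Tor$-rigid, which is the desired conclusion.

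There is essentially no obstacle: the corollary is a formal consequence of Corollary \ref{nograding} and the cited result of Dao. The only thing to be careful about is confirming the finite-length hypothesis needed for $\eta_c^R$ to be defined, which is immediate from the isolated-singularity condition built into \eqref{assumptions}.
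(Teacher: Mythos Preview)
Your proposal is correct and matches the paper's own proof essentially verbatim: invoke Corollary \ref{nograding} to get $\eta_c^R(M,N)=0$ for $c>1$, then apply \cite[Theorem 6.3]{Dao3}. The extra verification that $\Tor_j^R(M,N)$ has finite length for $j\gg 0$ is a reasonable elaboration, though the paper treats this as implicit in the standing assumptions.
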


\begin{proof} 
  By Corollary \ref{nograding}, $\eta_c^R(M,N)=0$ when $c > 1$, and
  the result immediately follows from \cite[Theorem 6.3]{Dao3}.
\end{proof}

\appendix
\section{Adapting Gulliksen's Work} \label{App}

\newcommand{\ul}[1]{\underline{#1}}
\newcommand{\Mod}[1]{\operatorname{Mod}_{#1}}
\newcommand{\tensor}{\otimes}
\newcommand{\into}{\lhook\joinrel\longrightarrow}
\newcommand{\onto}{\relbar\joinrel\twoheadrightarrow}
\newcommand{\bN}{\mathbb{N}}

We show in this appendix how to modify Gulliksen's work in \cite{Gu}
to give an alternative proof of Proposition \ref{P's} from the body of
this paper.  The key result is Proposition \ref{AppProp} below, which
was originally proven by Dao \cite[Lemma 3.2]{Dao3}.  Using this
result, a standard argument easily establishes Proposition \ref{P's}.

\begin{prop} \label{AppProp} %
  Assume $Q$ is a noetherian ring, $f_1, \dots, f_c \in Q$ is a
  regular sequence, $R = Q/(f_1, \cdots, f_c)$ and $M$ and $N$ are
  finitely generated $R$-modules. If for all $i \gg 0$,
  $\Tor_i^R(M,N)$ has finite length and $\Tor_i^Q(M,N) = 0$, then
  there exists an integer $j$ such that $\bigoplus_{i \geq j}
  \Tor_i^R(M,N)$ is artinian as a module over the polynomial ring
  $R[\chi_1, \dots, \chi_c]$, where the $\chi_i$'s act via the
  Eisenbud operators.
\end{prop}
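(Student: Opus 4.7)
The plan is to adapt Gulliksen's classical argument for $\Ext$ to the $\Tor$ side, producing a finite-generation statement for the tail $T := \bigoplus_{i \geq j_0} \Tor_i^R(M,N)$ over $S = R[\chi_1,\dots,\chi_c]$, where the $\chi_l$'s act by the Eisenbud operators of homological degree $-2$. Artinianness will then drop out from the finite-length hypothesis by a straightforward degree-counting argument.

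First, I would set up the chain-level Eisenbud operators. Take an $R$-free resolution $F \to M$ and lift it to a sequence $\tilde F$ of free $Q$-modules; the lifted differential $\tilde d$ satisfies $\tilde d^{\,2} = \sum_l f_l \tilde t_l$ for $Q$-linear maps $\tilde t_l$ of degree $-2$, whose reductions mod $(f_1,\dots,f_c)$ are chain-level Eisenbud operators $t_l \colon F \to F[-2]$. Following Gulliksen, I would enlarge $F$ to an $R$-free resolution $\mathbb{F}$ equipped with a chain-level $S$-action: concretely, $\mathbb{F}$ is built by adjoining formal variables dual to the $\chi_l$'s, with the differential arranged so that the $\chi_l$'s act via the $t_l$'s on homology. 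Then $\mathbb{F} \otimes_R N$ is a complex of $S$-modules whose homology is $\Tor_*^R(M,N)$ carrying its standard $S$-action.

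Second, I would establish finite generation of $T$ over $S$ for some $j_0$. Filter $\mathbb{F}$ by the polynomial degree in the adjoined variables; the associated graded complex, after tensoring with $N$, computes something built out of $\Tor_*^Q(M,N)$ via the relationship between the lift $\tilde F$ and a $Q$-free resolution of $M$. The hypothesis that $\Tor_i^Q(M,N) = 0$ for $i \gg 0$ forces the associated spectral sequence to be supported in finitely many rows, and a standard noetherian argument on its $E_\infty$-page then yields that $T$ is finitely generated as an $S$-module. I expect this step to be the main obstacle: Gulliksen's original argument is written for $\Ext$ with degree-increasing operators, and transplanting it to $\Tor$ with degree-decreasing operators requires careful verification that the filtration and convergence arguments continue to function as stated.

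Finally, I would deduce artinianness. Let $x_1, \dots, x_r$ be $S$-generators of $T$, with $x_k \in \Tor_{n_k}^R(M,N)$. Since each $\chi_l$ strictly lowers homological degree, every element of the cyclic submodule $S \cdot x_k$ lies in the finitely many homological degrees between $j_0$ and $n_k$; the finite-length hypothesis on each of these $\Tor_i^R(M,N)$ then forces $S \cdot x_k$ to have finite length as an $R$-module, and hence as an $S$-module. Therefore $T = \sum_k S \cdot x_k$ has finite length over $R$, and since every $S$-submodule of $T$ is in particular an $R$-submodule, $T$ is artinian over $S$, completing the plan.
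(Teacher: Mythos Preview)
There is a genuine gap in your middle step: the tail $T$ is \emph{not} finitely generated over $S = R[\chi_1,\dots,\chi_c]$ in general. Since each $\chi_l$ has homological degree $-2$, every finitely generated graded $S$-module is bounded above in homological degree, whereas $\Tor_i^R(M,N)$ is typically nonzero for infinitely many $i$ (take $M=N=\sk$ over any complete intersection of positive codimension). Indeed, your own final paragraph proves too much: from finite generation you correctly deduce that $T$ has finite $R$-length and is therefore supported in only finitely many homological degrees --- a conclusion that is false and should have flagged the error upstream. The noetherian output of the spectral sequence you sketch, to the extent it exists, is over the polynomial (or divided-power) ring in the degree $+2$ variables of the Shamash resolution, not over $S$; the Eisenbud operators act by contraction against those variables, and finite generation does not pass across that duality.

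The paper instead works directly with the artinian property, which is the correct finiteness condition for operators of negative degree. Its key lemma is an adaptation of Gulliksen's Lemma 1.2 to an ``almost artinian'' setting: if each graded piece $H_i$ is artinian over $R$ for $i \gg 0$ and $X$ is a negative-degree operator with $\ker(X)$ almost artinian over $G$, then $H$ is almost artinian over $G[X]$. One then runs an induction over subsets of $\{1,\dots,c\}$ using Gulliksen's short exact sequences of DG-module quotients, climbing from the base case $\Tor_*^Q(M,N)$ (almost zero by hypothesis) up to $\Tor_*^R(M,N)$.
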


Our proof of this result follows Gulliksen's proof of \cite[Theorem
3.1]{Gu}; we use two lemmas, both of which are analogues of his
results.  Our Lemma \ref{G(1.2)} sidesteps the issue raised in
\cite[Example 2.9]{Dao3} while following \cite[Lemma 1.2]{Gu}.  First
we give some notation.  

\begin{chunk} \label{gradedNotation} % 
Let $G$ be a $\bZ$ graded ring concentrated in non-positive degrees
(i.e., $G_i = 0$ for all $i > 0$). Note that given a graded $G$-module
$H$, for any integer $r$, we have that $H_{< r} := \bigoplus_{i < r}
H_i$ is a $G$-graded submodule of $H$ and $H_{\geq r} := H/H_{< r}$ is
a $G$-graded quotient module of $H$.  Following Dao, we say that a
graded $G$-module $H$ is {\em almost artinian} if there is an integer
$r$ such that $H_{\geq r}$ is artinian as a $G$-module.
\end{chunk}

\begin{lemma} \label{G(1.2)} %
  Let $G$ and $H$ be as in paragraph \ref{gradedNotation}.  Assume
  that $H_i$ is an artinian $G_0$-module for all $i \gg 0$.  Let $X
  \colon H \to H$ be a homogeneous $G$-linear map of negative degree.
  If $\ker(X)$ is almost artinian as a $G$-module, then $H$ is almost
  artinian as a $G[X]$-module.
\end{lemma}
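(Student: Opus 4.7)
The plan is to dualize Gulliksen's \cite[Lemma 1.2]{Gu} to the artinian/negative-degree setting. First I would reduce to the case that $\ker(X)$ itself (not merely its truncation) is $G$-artinian: choose $r_0$ large enough that $\ker(X)_{\geq r_0}$ is $G$-artinian and each $H_i$ with $i \geq r_0$ is $G_0$-artinian, and pass to $H' := H/H_{<r_0}$. This $H'$ is a graded $G[X]$-module since $X(H_{<r_0}) \subseteq H_{<r_0}$ (as $X$ has negative degree); let $X' \colon H' \to H'$ be the induced map. A direct degree-by-degree calculation shows that, with $d := -\deg(X) > 0$, the module $\ker(X')$ is an extension of $\ker(X)_{\geq r_0 + d}$ by the finite direct sum $H_{[r_0, r_0 + d - 1]}$; both outer terms are $G$-artinian (the second as a finite sum of $G_0$-artinian pieces), so $\ker(X')$ is $G$-artinian.

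The crux of the argument is the following elementary observation: for graded $G[X']$-submodules $L' \subseteq L$ of $H'$, the equalities $L' \cap \ker(X') = L \cap \ker(X')$ and $X'(L') = X'(L)$ together imply $L' = L$. Indeed, given $\ell \in L$, pick $\ell' \in L'$ with $X'(\ell') = X'(\ell)$; then $\ell - \ell' \in L \cap \ker(X') = L' \cap \ker(X') \subseteq L'$, whence $\ell \in L'$.

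Using this, I would prove $H'$ is $G[X']$-artinian as follows. Given a descending chain $L_1 \supseteq L_2 \supseteq \cdots$ of graded $G[X']$-submodules, the chain $\{L_k \cap \ker(X')\}_k$ lives in the $G$-artinian module $\ker(X')$ and hence stabilizes. By the key observation, stabilization of $\{L_k\}_k$ then reduces to that of $\{X'(L_k)\}_k$, and recursively to that of $\{{X'}^n(L_k)\}_k$ for each $n \geq 0$. The stable values $K_n^\infty := \lim_k ({X'}^n(L_k) \cap \ker(X'))$ form a descending chain of $G$-submodules of the $G$-artinian module $\ker(X')$ as $n$ varies, and so stabilize at some $n_0$. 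A careful degree-wise bookkeeping, exploiting that $X'$ lowers degree by $d$ and each $H'_i$ is $G_0$-artinian, then shows that $\{{X'}^{n_0}(L_k)\}_k$ itself stabilizes in $k$, which propagates back through the iteration to stabilization of $\{L_k\}_k$.

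The main obstacle is precisely this last bookkeeping step: turning the $n$-direction stabilization of $K_n^\infty$, together with the degree-wise $G_0$-artinianness of each $H'_i$, into a global $k$-direction stabilization of $\{{X'}^{n_0}(L_k)\}_k$. Everything preceding this step is essentially formal manipulation once the key observation in the second paragraph is in hand.
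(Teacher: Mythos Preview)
Your reduction to the truncation $H' = H_{\geq r_0}$ with induced endomorphism $X'$, together with the verification that $\ker(X')$ is $G$-artinian as an extension involving finitely many $G_0$-artinian graded pieces, is precisely what the paper does: the paper's $Y_{\geq r}$ is your $X'$, and its left-exact sequence $0 \to \ker(X_{\geq r}) \to \ker(Y_{\geq r}) \to \ker(\pi_{\geq r+w})$ plays the same role as your extension (the two filtrations of $\ker(X')$ differ, but both work). At that point, however, the paper simply invokes \cite[Lemma~1.2]{Gu} as a black box to conclude that $H'$ is $G[X']$-artinian, whereas you attempt to reprove that lemma from first principles.

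Your outline of that reproof is sound through the stabilization of $\{K_n^\infty\}_n$, and your key observation is exactly the right tool. But the step you yourself flag as the ``main obstacle'' is a genuine gap as written: you assert that degree-wise $G_0$-artinianness of the $H'_i$, together with the $n$-stabilization, forces $\{{X'}^{n_0}(L_k)\}_k$ to stabilize, without supplying the argument. In fact the degree-wise hypothesis is not what closes the gap; the essential point is that $X'$ is \emph{locally nilpotent} on $H'$ (every homogeneous element is killed by some power of $X'$, since $H'$ is bounded below and $X'$ has negative degree). One way to finish: having found $n_0$ and $k_0$ with ${X'}^{n}(L_k)\cap\ker(X') = K_\infty$ for all $n\geq n_0$ and $k\geq k_0$, prove by induction on $m\geq 1$, simultaneously for all $n\geq 0$, that ${X'}^{n_0+n}(L_k)\cap\ker({X'}^{m})$ is independent of $k\geq k_0$; local nilpotence then yields that ${X'}^{n_0}(L_k)$ is constant in $k$, and finitely many applications of your key observation unwind back to constancy of $\{L_k\}_{k\gg 0}$. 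Until this or an equivalent argument is supplied, your proposal is incomplete exactly where the paper's proof closes by citation.
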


\begin{proof}
  Let $X$ have degree $w < 0$.  For each $r$,
  there is a map of degree $w$ on quotient modules given by
  multiplication by $X$:
$$
X_{\geq r}: H_{\geq r} \to H_{\geq r+w}.
$$
Note that $\ker(X_{\geq r}) = \ker(X)_{\geq r}$, where the latter uses the
notation of \ref{gradedNotation}, and hence $\ker(X_{ \geq r})$ is artinian
as a $G$-module for $r \gg 0$, by assumption.

Since $w < 0$, there is a canonical surjection $\pi_{\geq r+w} \colon
H_{\geq r+w} \onto H_{\geq r}$ of graded $G$-modules having degree
$0$. Define $Y_{\geq r} = \pi_{\geq r+w} \circ X_{\geq r}$ so that
$Y_{\geq r}$ is the endomorphism of $H_{\geq r}$ of degree $w$ given by
multiplication by $X$, and we have
the left exact sequence
\begin{equation} \label{AppE} %
  0 \to \ker(X_{\geq r})\to \ker(Y_{\geq r}) \stackrel {\cdot X} \to
  \ker(\pi_{\geq r+w}).
\end{equation}

The module $\ker(\pi_{\geq r+w})$ may be regarded as a $G_0$-module
via restriction of scalars along the inclusion $G_0 \into G$.  As a
$G_0$-module, $\ker(\pi_{\geq r+w})$ is a finite direct sum of $H_i$
for $i = r+w, \ldots, r$.  Hence for $r \gg 0$, it is an artinian
$G_0$-module by our assumption.  So for $r \gg 0$, it follows that
$\ker(\pi_{\geq r+w})$ must also be artinian as a $G$-module. 
Thus for $r \gg 0$, the module $\ker(Y_{\geq r})$ is an
artinian $G$-module, as follows from exact sequence \eqref{AppE}.  

It follows from  \cite[Lemma 1.2]{Gu} that $H_{\geq r}$ is
artinian as a graded $G[Y_{\geq r}]$-module, for $r \gg 0$; that is, 
$H$ is an almost artinian $G[X]$-module.  
\end{proof}

The {\em Koszul algebra} $K$ associated to a regular sequence
$(f_1,\dots,f_c)$ of elements of a commutative ring $Q$ is defined to
be the following DG $Q$-algebra: The underlying graded $Q$-algebra is
the exterior algebra $\bigwedge^*_Q(Q^c)$ on the free $Q$-module
$Q^c$, indexed so that $\bigwedge_Q^j(Q^c)$ lies in homological degree
$j$.  Let $T_1,\dots,T_c$ be the standard basis of $Q^c$. The
differential $\del$ of $K$ is uniquely determined by setting
$\del(T_i) = f_i$ and requiring that it satisfy the Leibniz rule:
$\del(ab) = \del(a)b + (-1)^{\deg a}a\del(b)$.

The Koszul algebra comes equipped with a ring map, called the {\em
  augmentation}, to its degree zero homology, namely $H_0(K) =
Q/(f_1,\dots,f_c) =: R$.  Since $(f_1,\dots,f_c)$ is $Q$-regular, the
augmentation $K \twoheadrightarrow R$ is a quasi-isomorphism, so that
$K$ is a DG algebra resolution of $R$ over $Q$ that is free as a
$Q$-module.
Recall that a {\em DG module} over $K$ is a graded $K$-module $L$ equipped
with a differential $d_L$ of degree minus one so that $d_L(a x) = \del(a) x
+ (-1)^{|a|} a d_L(x)$ holds for all homogeneous elements $a \in K$
and $x \in L$. 
See \cite{Avr} for more background material on DG algebras and DG modules.

\newcommand{\bw}{\begin{smallmatrix}}
\newcommand{\ew}{\end{smallmatrix}}
\newcommand{\bb}{\left[ \bw}
\newcommand{\eb}{\ew \right]}

\begin{chunk} \label{AppendixNotation1} % 
  Let $K$ be Koszul algebra over $Q$ associated to a regular sequence
  $f_1, \ldots, f_c$ in $Q$ and let $I = \ker(K \onto R)$ be the
  augmentation ideal.  Let $L$ be a DG module over $K$ that is graded
  free as a module over the graded ring underlying $K$.  Let
  $N$ be a finitely generated $R$-module.  Define $Y$ to be $L
  \otimes_Q R = L/(f_1, \dots, f_c) L$.  For any subset $S = \{i_1,
  \ldots, i_s\}$ of $\{1, \ldots, c\}$, define $I_S = (T_{i_1},
  \ldots, T_{i_s})$ and set $Y^S = Y/I_SY$. In particular,
  $Y^\emptyset = Y$ and $Y^{\{1, \ldots, c\}} = Y/IY = L/IL = L
  \otimes_K R$.  For each $S$, the complex $Y^S$ is a complex of
  $Q$-modules, and in fact of $R$-modules.
\end{chunk}

\begin{chunk} \label{AppendixNotation2}
  Gulliksen shows \cite[p. 176--8]{Gu} that for $i \in S$ there is
  an exact sequence of complexes of $Q$-modules
$$
 0 \to Y^S \stackrel{T_i} \to Y^{S \setminus \{i\}} \to Y^S \to 0
$$  
that is degree-wise split exact.  It follows that
\begin{equation} \label{G(6)} % 
   0 \to Y^S \otimes_Q N \to Y^{S \setminus
    \{i\}} \otimes_Q N \to Y^S \otimes_Q N \to 0
\end{equation}
is also exact, giving a long exact sequence in homology.  The boundary
map in this sequence, $\H(Y^S \otimes_Q N) \to \H(Y^S \otimes_Q N)$,
is, up to sign, the action of $X_i$ on $\H(Y^S \otimes_Q N)$ as
defined by Gulliksen.  Thus $X_i$ has degree $-2$, since $T_i$ has
degree $+1$.  Gulliksen proves that these actions commute: $X_i X_j =
X_j X_i$ on $\H(Y^S \otimes_QN)$ when $i, j \in S$.  When $S = \{1,
\dots, c\}$, these actions endow $\H(L/IL \otimes_R N)$ with the
structure of a graded module over the graded ring $R[X_1, \dots, X_c]$
where each $X_i$ has degree $-2$.
\end{chunk}

Our next lemma is similar to \cite[Lemma 3.2(ii)]{Gu}.

\begin{lemma} \label{G(3.2)} %
  With the assumptions in paragraph \ref{AppendixNotation1}, and with
  the $X_i$ from paragraph \ref{AppendixNotation2}, if $\H_i(L/IL
  \otimes_R N)$ is artinian as an $R$-module and $\H_i(L \otimes_Q N)
  = 0$ for $i \gg 0$, then $\H(L/IL \otimes_R N)$ is almost artinian
  as an $R[X_1, \dots, X_c]$-module.
\end{lemma}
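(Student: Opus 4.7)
The plan is to prove by induction on $|S|$ the strengthened statement that for every subset $S \subseteq \{1,\dots,c\}$ the homology $\H(Y^S \otimes_Q N)$ is almost artinian as a module over $R[X_j : j \in S]$; the lemma is then the case $S = \{1,\dots,c\}$. This mirrors Gulliksen's proof of \cite[Theorem 3.1]{Gu}, with our Lemma \ref{G(1.2)} in place of his \cite[Lemma 1.2]{Gu} so as to sidestep the issue raised in \cite[Example 2.9]{Dao3}.

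Before the main induction, I will record a preliminary fact, proved by downward induction on $|S|$: for every $S$, the $R$-module $\H_i(Y^S \otimes_Q N)$ is artinian for all $i \gg 0$. The base case $|S| = c$ is exactly the standing hypothesis on $L/IL \otimes_R N$. For the inductive step, pick $i \in S$; the long exact sequence associated to \eqref{G(6)} squeezes each $\H_j(Y^{S \setminus \{i\}} \otimes_Q N)$ between two artinian $R$-modules in every sufficiently large degree, forcing it to be artinian as well.

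The main upward induction then proceeds as follows. The base case $S = \emptyset$ is immediate, since $Y^\emptyset = L$ and the hypothesis $\H_i(L \otimes_Q N) = 0$ for $i \gg 0$ makes $\H(L \otimes_Q N)$ almost artinian over $R$. For the inductive step, assume the claim for sets of size $k - 1$ and let $|S| = k$; fix $i \in S$ and set $T = S \setminus \{i\}$. By paragraph \ref{AppendixNotation2} the connecting map in the long exact sequence associated to \eqref{G(6)} equals, up to sign, the action of $X_i$ on $\H(Y^S \otimes_Q N)$, while its two remaining maps commute with the action of $X_j$ for each $j \in T$ by naturality of Gulliksen's construction under enlarging the set of $T_j$'s being killed; checking this compatibility carefully is the main obstacle I anticipate. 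Granting it, $\ker(X_i)$ on $\H(Y^S \otimes_Q N)$ is identified with an $R[X_j : j \in T]$-subquotient of $\H(Y^T \otimes_Q N)$, and so is almost artinian over $R[X_j : j \in T]$ by the inductive hypothesis. Lemma \ref{G(1.2)} now applies with $G = R[X_j : j \in T]$ (concentrated in non-positive degrees since each $X_j$ has degree $-2$), $H = \H(Y^S \otimes_Q N)$, and $X = X_i$ of degree $-2$: its hypothesis that $H_i$ be artinian over $G_0 = R$ for $i \gg 0$ is supplied by the preliminary fact, and its hypothesis that $\ker(X)$ be almost artinian over $G$ has just been established. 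The conclusion is that $H$ is almost artinian over $G[X] = R[X_j : j \in S]$, closing the induction.
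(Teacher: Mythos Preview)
Your proof is correct and follows essentially the same approach as the paper's: a preliminary downward induction on $|S|$ to show each $\H_i(Y^S \otimes_Q N)$ is artinian over $R$ for $i \gg 0$, followed by an upward induction on $|S|$ using the long exact sequence from \eqref{G(6)} and Lemma~\ref{G(1.2)}. The paper phrases the inductive step slightly more tersely---it observes that $\ker(X_i)$ is a graded quotient of the almost artinian $G^{S\setminus\{i\}}$-module $\H(Y^{S\setminus\{i\}}\otimes_Q N)$---and, like you, it does not pause to verify the $G^{S\setminus\{i\}}$-linearity of the maps in the long exact sequence; this compatibility is implicit in Gulliksen's construction \cite[pp.~176--178]{Gu}, so your flagged ``main obstacle'' is not a genuine gap but rather a point both proofs defer to \cite{Gu}.
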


\begin{proof}
  We have that for $i
  \gg 0$, the $R$-module $\H_i(Y^{\{1, \dots, c\}} \otimes_R N) =
  \H_i(L/IL \otimes_R N)$ is artinian and $\H_i(Y^\emptyset \otimes_Q
  N) = \H_i(L \otimes_Q N) = 0$.
 
  We first observe that, for every $S \subseteq \{1, \dots, c\}$,
  $\H_i(Y^S \otimes_Q N)$ is also artinian over $R$, for $i \gg
  0$. Indeed, this follows immediately by descending induction on the
  cardinality of $S$ using the long exact sequence in homology
  associated to the exact sequence \eqref{G(6)}.

  For $S = \{i_1, \ldots, i_s\}\subseteq\{1,\dots,c\}$, let $G^S =
  R[X_{i_1}, \ldots, X_{i_s}]$. In particular, $G^\emptyset = R$ and
  $G^{\{1, \ldots, c\}} = R[X_1, \dots, X_c]$.  We prove $\H(Y^S
  \otimes_Q N)$ is an almost artinian $G^S$-module for each $S$, by
  induction on the cardinality of $S$.  When $S = \emptyset$, then
  $Y^S = Y^\emptyset = L$ and, by assumption, $\H(Y^\emptyset \otimes_Q
  N) = \H(L \otimes_Q N)$ is an almost artinian $R$-module.

  Assume that $i \in S$.  The exact sequence \eqref{G(6)} gives an
  exact homology sequence 
  $$ 
  \xymatrix{
  \H(Y^{S \setminus \{i\} } \otimes_Q N) \ar[r]
  & \H(Y^S \otimes_Q N) \ar[r]^{X_i} 
    & \H(Y^S \otimes_Q N).
  }
  $$ 
  By the induction hypothesis, 
$\H(Y^{S \setminus \{i\} } \otimes_Q N)$ is almost artinian as a $G^{S \setminus
    \{i\}}$-module, and since the graded quotient of an almost artinian
  module is almost artinian, we see that $\ker(X_i)$ is almost
  artinian.
Since $\H_q(Y^S \otimes_Q N)$ 
is artinian over $G_0 = R$, for
  $q \gg 0$, as was shown above,
Lemma \ref{G(1.2)}  applies to show  $\H(Y^S \otimes_Q N)$ is an almost
artinian $G^S$-module.
\end{proof}

\begin{proof}[Proof of Proposition \ref{AppProp}] % 
  Regard $M$ as a DG module concentrated in degree $0$ via restriction
  of scalars along the augmentation.  Gulliksen shows in \cite[Lemma
  2.4]{Gu} how to construct a DG module $L$ over $K$ and a map $L \to
  M$ of DG modules such that $L$ is free over the graded ring
  underlying $K$ and the map $L \to M$ is a quasi-isomorphism.  Note
  that $L \to M$ is, in particular, a resolution of $M$ by free
  $Q$-modules.  Moreover, Gulliksen shows \cite[Lemma 2.6]{Gu} that
  the projection map $L \twoheadrightarrow L/IL = L \otimes_K R$ is a
  quasi-isomorphism where $I = \ker(K\twoheadrightarrow R)$ is the
  augmentation ideal.  In particular, this means that $L/IL$ is an
  $R$-free resolution of $M$.  We therefore obtain the isomorphisms
$$
H_i(L \otimes_Q N) \cong \Tor_i^Q(M,N) \quad \text{  and } \quad
H_i(L/IL \otimes_R N) \cong \Tor_i^R(M,N)
$$
for any $R$-module $N$.

Lemma \ref{G(3.2)} now applies to prove that $\bigoplus_i
\Tor_i^R(M,N)$ is almost artinian as a $R[X_1, \dots,
X_c]$-module. Finally, Avramov and Sun \cite[\S 4]{AvrSun} prove that
the $X_i$'s as constructed by Gulliksen agree with the Eisenbud
operators, up to a sign.
\end{proof}

\begin{ack} 
  We thank the referee for helpful comments regarding this paper.
\end{ack}

%%%%%%%%%%%%%%%%%%%%%%%%%%%%%%%%%%%%%%%%%%%%%%%%%%%%%%%%%%%%%%%%

\end{document}